\crefname{equation}{}{}
\crefname{figure}{{\sc Figure}}{{\sc Figure}}
\def\multiset#1#2{\ensuremath{\left(\kern-.3em\left(\genfrac{}{}{0pt}{}{#1}{#2}\right)\kern-.3em\right)}}
\newtheorem{theorem}{Theorem}[section]
\newtheorem{lemma}[theorem]{Lemma}
\newtheorem{question}[theorem]{Question}
\newtheorem{corollary}[theorem]{Corollary}
\newtheorem{proposition}[theorem]{Proposition}
\theoremstyle{definition}
\newtheorem{definition}[theorem]{Definition}
\newtheorem{example}[theorem]{Example}
\theoremstyle{remark}
\newtheorem{remark}[theorem]{Remark}
\numberwithin{equation}{section} \numberwithin{figure}{section}
\numberwithin{table}{section}
\newcommand{\F}{(\mathbb{F}_q^n, \textup{Euc}_n)}
\title[Combinatorics of Euclidean Spaces over Finite Fields]
{Combinatorics of Euclidean Spaces\\ over Finite Fields}  
\author{Semin Yoo}
\date{\today}
\address{School of Computational Sciences, Korea Institute for Advanced Study (KIAS), 85 Hoegiro Dongdaemun-gu, Seoul 02455, Republic of Korea }
\email{syoo19@kias.re.kr}
\subjclass[2020]{05A30, 15A63, 05E99
}
\keywords{quadratic forms and orthogonal groups over finite fields, $q$-binomial coefficients}
\begin{document}

\maketitle

\begin{abstract}
The $q$-binomial coefficients are $q$-analogues of the binomial coefficients, counting the number of $k$-dimensional subspaces in the $n$-dimensional vector space $\mathbb{F}^n_q$ over $\mathbb{F}_{q}$. In this paper, we define a Euclidean analogue of $q$-binomial coefficients as the number of $k$-dimensional subspaces which have an orthonormal basis in the quadratic space $(\mathbb{F}_{q}^{n},x_{1}^{2}+x_{2}^{2}+\cdots+x_{n}^{2})$. We prove its various combinatorial properties compared with those of $q$-binomial coefficients. 
In addition, we formulate the number of subspaces of other quadratic types and study some related properties. 
\end{abstract}

\section{Introduction and statements of results}\label{introduction}

A \textbf{Euclidean space} is a finite-dimensional vector space over the real numbers with an inner product, i.e. a positive-definite symmetric bilinear form. This allows us to define the concept of orthonormality between vectors, in particular, providing the well-known fact that any Euclidean space has an orthonormal basis. 

In this paper, we change the base field of vector spaces from the field of real numbers to finite fields. 
Since there does not exist the concept of positivity for numbers in finite fields, we consider a symmetric bilinear form, called a \textbf{quadratic form}, ignoring the property of positive-definite of inner products. 
A \textbf{quadratic space} is a vector space over any field equipped with a quadratic form.
Throughout the paper, we assume that the characteristic of the base field of quadratic spaces is not two since the definition of quadratic forms over the fields of odd characteristic is different in the case where the characteristic of the base field is two. (See Chapter $7$ in \cite{Co}.)
One can show that any quadratic space over a finite field has an orthogonal basis, but there is no guarantee whether quadratic spaces over finite fields have an orthonormal basis unlike over real numbers. 

We restrict our interest to $\mathbb{F}_{q}^{n}$, the $n$-dimensional vector space over the finite field $\mathbb{F}_{q}$ with $q$ elements, where $q$ is an odd prime power. 
Note that the number of $k$-dimensional subspaces of $\mathbb{F}_{q}^{n}$ is given by the $q$-binomial coefficient $\binom{n}{k}_{q}$ which plays an important role in combinatorics. 
One of the surprising phenomena about the $q$-binomial coefficient $\binom{n}{k}_{q}$ is the fact that the $q$-binomial coefficient $\binom{n}{k}_{q}$ can be considered as the $q$-analogues of the binomial coefficients.

$q$-Analogues of quantities in mathematics involve perturbations of classical quantities using the parameter $q$, and revert to the original quantities when $q$ goes $1$.
The $q$-analogue of binomial coefficients is one of the important examples.
When $q$ goes to $1$, the $q$-binomial coefficient $\binom{n}{k}_{q}$ reverts to the binomial coefficient $\binom{n}{k}$ which measures the number of $k$-sets in $\left [ n \right ]$. 
We summarize well-known relationships between the binomial coefficients and the $q$-binomial coefficients in \cref{field}. 
In the table, we denote $A \in GL(k,q)$, $B \in GL(n-k,q)$, and $C$ is a $k \times (n-k)$ matrix.
\begin{table}[t]
\renewcommand{\arraystretch}{1.2}
\begin{center}
\scalebox{0.9}{
\begin{tabular}{c||c|c}
 & \textbf{Field with one element}  & $\mathbf{\mathbb{F}_{\mathit{q}}}$ \textbf{($\mathbf{\mathit{q}}$-analogues)} \\ \hline \hline
 
\textbf{object} & $\left [ n \right ]=\left \{ 1,2,\ldots,n \right \}$  & 
$\mathbb{F}^{n}_{q}$ \\ \hline
\textbf{subobject} &   a $k$-elements subset in $\left [ n \right ]$ & 
a $k$-dimensional subspace of $\mathbb{F}^{n}_{q}$ \\ \hline
\textbf{bracket}   & $n$ & the number of lines in $\mathbb{F}_{q}^{n}$ \\ \hline
\textbf{factorial} & $n!$ & $\left [ n\right ]_{q}!$ \\ \hline
\textbf{poset} & $B_{n}$ & $L_{n}(q)$ \\ \hline
\textbf{flag} & a flag in $\left [ n \right ]$ & a flag in $\mathbb{F}_{q}^{n}$\\ \hline
\textbf{group}& $|S_{n}|=n!$ & $|GL(n,q)|=q^{n(n-1)/2}(q-1)^{n}\left [ n \right ]_{q}!$ \\ \hline
\textbf{formula} & $\binom{n}{k}=\frac{n!}{k!(n-k)!}=\left | \frac{S_{n}}{S_{k}\times S_{n-k}} \right |$  & $\binom{n}{k}_{q}=\frac{[n]_{q}!}{[k]_{q}![(n-k)]_{q}!}=\Big | \frac{GL(n,q)}{\bigl(\begin{smallmatrix}
A & C \\ 
\mathbf{0} & B
\end{smallmatrix}\bigr)} \Big |$ \\ \hline
\textbf{connection}& \multicolumn{2}{c}{$
\lim_{q \rightarrow 1} \binom{n}{k}_{q}=\binom{n}{k}$}\\
\end{tabular}}
\caption{Example of Field with one element analogues}
\label{field}
\end{center}
\end{table}
It turns out that $q$-analogues appear in many mathematical areas not only combinatorics but also in the study of special functions and quantum groups. For more discussions on this topic, we refer the reader to \cite{Lo, St3}.

In a group theoretic perspective, we note that binomial coefficients are belong to the theory governed by symmetric groups since the symmetric group $S_{n}$ acts transitively on the set of $k$-elements sets in $[n]$. 
Similarly, since the general linear group $GL(n,q)$ over $\mathbb{F}_{q}$ acts transitively on the set of $k$-dimensional subspaces of $\mathbb{F}_{q}^{n}$, the theory of $q$-binomial coefficients is governed by general linear groups.

The main purpose of this paper is to introduce a formula of counting the number of subspaces of $(\mathbb{F}_{q}^{n},x_{1}^{2}+x_{2}^{2}+\cdots+x_{n}^{2})$ that have an orthonormal basis, which can be written as an analogue of binomial coefficients (\cref{subsp} and \cref{polynomials}), and to study related combinatorics listed in the last column of \cref{table-analog}. 
We develop a new analogue of binomial coefficients which is governed by the orthogonal group $O(n,q)$ over $\mathbb{F}_{q}$.

\begin{table}[H]
\renewcommand{\arraystretch}{1.3}
\begin{center}
\scalebox{0.87}{
\begin{tabular}{c||c|c}
 &  $\mathbf{\mathit{q}}$\textbf{-analogues} & \textbf{Euclidean-analogues} \\ \hline \hline
\textbf{space} &  $\mathbb{F}^{n}_{q}$ & 
$(\mathbb{F}^{n}_{q},\text{Euc}_{n})$ \\ \hline
\textbf{subspace} &  a $k$-dimensional subspace of $\mathbb{F}^{n}_{q}$ & 
a $k$-dimensional Euclidean subspace of $(\mathbb{F}^{n}_{q},\text{Euc}_{n})$\\ \hline
\textbf{bracket}   & the number of lines in $\mathbb{F}^{n}_{q}$ & the number of Euclidean lines in $(\mathbb{F}^{n}_{q},\text{Euc}_{n})$  \\ \hline
\textbf{factorial} & $\left [ n\right ]_{q}!$ & $\left [n \right ]_{q}^{\perp}!$ \\ \hline
\textbf{poset} & $L_{n}(q)$ & $E_{n}(q)$ \\ \hline
\textbf{flag} & a flag in $\mathbb{F}_{q}^{n}$ & a Euclidean flag in ($\mathbb{F}_{q}^{n}$,Euc$_{n}$)\\ \hline
\textbf{group}& $|GL(n,q)|=q^{n(n-1)/2}(q-1)^{n}\left [n \right]_{q}!$ & $|O(n,q)|=2^{n}\left [n\right ]_{q}^{\perp}!$ \\ \hline
\textbf{formula} &  $\binom{n}{k}_{q}=\frac{[n]_{q}!}{[k]_{q}![n-k]_{q}!}=\left | \frac{GL(n,q)}{\bigl(\begin{smallmatrix}
A & C \\ 
\mathbf{0} & B
\end{smallmatrix}\bigr)} \right |$ & $\binom{n}{k}_{q}^{\perp}=\frac{[n]_{q}^{\perp}!}{[k]_{q}^{\perp}![n-k]_{q}^{\perp}!}=\left | \frac{O(n,q)}{O(k,q)\times O(n-k,q)} \right |$ \\ \hline
\textbf{polynomial} & a polynomial degree of $k(n-k)$ in $q$  & a polynomial degree of $k(n-k)$ in $q$ \\ \hline
 \multirow{2}{*}{\textbf{limit}} & \multirow{2}{*}{$\lim_{q \to 1}\binom{n}{k}_{q}=\binom{n}{k}$} & $\lim_{q \to 1}\binom{n}{k}_{q}^{\perp}=\binom{\left \lfloor n/2 \right \rfloor}{\left \lfloor k \right \rfloor}$ when $q \equiv$ 1 (mod $4$)\\
 & & $\lim_{q \to -1}\binom{n}{k}_{q}^{\perp}=\binom{\left \lfloor n/2 \right \rfloor}{\left \lfloor k \right \rfloor}$ when $q \equiv$ 3 (mod $4$)
\end{tabular}}
\caption{The $q$-analogues and the Euclidean-analogues}
\label{table-analog}
\end{center}
\end{table} 

From here, we elaborate on our work. 
Recall that in the theory of quadratic forms, any quadratic form $Q$ on a vector spaces $V$ over $F$ of dimension $n$ can be diagonalized if char$(F)\ne 2$. In other words, after changing coordinates, $Q$ can be written as 
$Q=\sum_{i=1}^{n}a_{i}x_{i}^{2}$ for some $a_{i} \in F$. Here, $a_{i}$ could be zero. To find an explicit form of $Q$, we only need to determine the coefficients $a_{i}$'s. 
For example, if $F=\mathbb{R}$ (or $F=\mathbb{C}$), any non-degenerate quadratic form on a vector space of dimension $n$ is equivalent to 
$x_{1}^{2}+x_{2}^{2}+\cdots+x_{p}^{2}-x_{p+1}^{2}-x_{p+2}^{2}-\cdots -x_{n}^{2}$ for a unique $0 \leq p \leq n$  (or $x_{1}^{2}+x_{2}^{2}+\cdots+x_{n}^{2}$, respectively).
Over $\mathbb{F}_{q}$, where $q$ is an odd prime power, any non-degenerate quadratic form on a vector space of dimension $n$ is equivalent to one of the following two forms:
\begin{align}\label{classi1}
\begin{split}
\text{Euc}_n(\mathbf{x}) & :=x_{1}^{2}+x_{2}^{2}+\cdots+x_{n-1}^{2}+x_{n}^{2} \quad \text{or} \\
\text{Lor$_{n}$}(\mathbf{x}) & :=x_{1}^{2}+x_{2}^{2}+\cdots+x_{n-1}^{2}+\lambda x_{n}^{2} \quad \text{for some non-square $\lambda$ in $\mathbb{F}_{q}$}. 
\end{split}
\end{align}
Even though this classification has been already used by number theorists as seen in \cite{Se}, we introduce some terminologies for convenience. We call $(\mathbb{F}_q^n, \text{Euc}_n)$ the \textbf{Euclidean space} over $\mathbb{F}_{q}$. 
The motivation for the name is due to the fact that any positive-definite symmetric form over real numbers (inner product) is equivalent to $x_{1}^2+x_{2}^{2}+\cdots+x_{n}^{2}$ as a quadratic form. 
We also call $(\mathbb{F}_{q}^{n}, \text{Lor}_{n})$ the \textbf{Lorentzian space} over $\mathbb{F}_{q}$, following the convention that $(\mathbb{R}^{n},x_{1}^{2}+x_{2}^{2}+\cdots+x_{n-1}^{2}-x_{n}^{2})$ is called a Lorentzian space. 
We consider the concept of negative real numbers as analogous to the concept of non-square numbers in finite fields. When there is no danger of confusion, we let 
Euc$_{n}$ and Lor$_{n}$ denote $\F$ and $(\mathbb{F}_q^n, \text{Lor}_n)$, respectively.

Note that finite geometers use the following classification of non-degenerate quadratic forms on a $n$-dimensional vector space over $\mathbb{F}_{q}$, which reveals their own geometric structures. 
A $n$-dimensional vector space over $\mathbb{F}_{q}$ with a non-degenerate quadratic form is isometrically isomorphic to one of the following spaces:
\begin{align}\label{classi2}
\begin{split}
& \textup{hyperbolic}:~  k\mathbb{H} \textup{ when }n=2k \textup{ and } \mathbb{H} \textup{ is the hyperbolic plane},\\
& \textup{elliptic}:~ (k-1)\mathbb{H} \oplus (x^{2}-\lambda y^{2}) \textup{ when }n=2k  \textup{ and }\lambda \textup{ is a non-square}, \\
& \textup{parabolic}:~ k\mathbb{H}\oplus cx^{2} \textup{ when } n=2k+1 \textup{ and } c \textup{ is either }1 \textup{ or a non-square }. 
\end{split}
\end{align}
For the definition of the hyperbolic plane, see \cref{sec2}.
We are interested in using the first number theoretic classification. 
It is possible to recover mathematical results from \cref{classi1} to the ones in the version of \cref{classi2} by matching the correct quadratic type between \cref{classi1} and \cref{classi2}.
For example, one can see \cref{corresp}.

Let us call a $k$-dimensional subspace $W$ of $(\mathbb{F}_{q}^{n},\text{Euc}_{n})$ \textbf{Euclidean} (or \textbf{Lorentzian}, respectively) if $(W,\text{Euc}_{n}|_{W})$ is isometrically isomorphic to $(\mathbb{F}_{q}^{k},\text{Euc}_{k})$ (or $(\mathbb{F}_{q}^{k},\text{Lor}_{k}) $, respectively) where Euc$_{n}|_{W}$ is the restricted quadratic form on $W$. We will call a $1$-dimensional Euclidean (or Lorentzian) subspace of $(\mathbb{F}_{q}^{n},\text{Euc}_{n})$ a \textbf{Euclidean line} (or \textbf{Lorentzian line}, respectively) of $(\mathbb{F}_{q}^{n},\text{Euc}_{n})$. Note that $(W,\text{Euc}_{n}|_{W})$ is a $k$-dimensional Euclidean subspace of $(\mathbb{F}_{q}^{n},\text{Euc}_{n})$ if and only if $(W,\text{Euc}_{n}|_{W})$ has an orthonormal basis. 
Thus, in order to count the number of $k$-dimensional subspaces of $(\mathbb{F}_{q}^{n},\text{Euc}_{n})$ that have an orthonormal basis, we count the number of $k$-dimensional Euclidean subspaces of $(\mathbb{F}_{q}^{n},\text{Euc}_{n})$.

In \cref{section3}, we consider the poset of all Euclidean subspaces of $(\mathbb{F}_{q}^{n},\text{Euc}_{n})$ under set-inclusion,  
which we call the \textbf{Euclidean poset} and denote it by $E_n(q)$. In \cite{St2}, the author shows that the poset $L_n(q)$ which consists of all subspaces of  $\mathbb{F}_{q}^{n}$ under set-inclusion is rank symmetric and rank unimodal. We show that the Euclidean poset $E_{n}(q)$ is rank symmetric (\cref{rs}) and rank unimodal (\cref{ru}). Furthermore, using the structure of the Euclidean poset $E_n(q)$, we find that the number of $k$-dimensional Euclidean subspaces of $(\mathbb{F}_{q}^{n},\text{Euc}_{n})$ is given by
\[
|\text{Euc}_{k},\text{Euc}_{n}|_{q}:=\frac{|\text{Euc}_{1},\text{Euc}_{n}|_{q}|\text{Euc}_{1},\text{Euc}_{n-1}|_{q}\cdots|\text{Euc}_{1},\text{Euc}_{n-k+1}|_{q}}{|\text{Euc}_{1},\text{Euc}_{k}|_{q}\cdots|\text{Euc}_{1},\text{Euc}_{1}|_{q}}.
\]
Observing the similarity between this formula and the one for the binomial coefficients, we define the \textbf{Euclidean binomial coefficient} (\cref{dot-binom}) as follows:
\[
\binom{n}{k}_{q}^{\perp}:=|\text{Euc}_{k},\text{Euc}_{n}|_{q}=\frac{[n]_{q}^{\perp}!}{[k]_{q}^{\perp}![n-k]_{q}^{\perp}!},
\]
where $[k]_{q}^{\perp}=|\text{Euc}_{1},\text{Euc}_{k}|_{q}$ and $[n]_{q}^{\perp}! =\prod_{k=1}^n [k]_{q}^{\perp}$. The bracket $[k]_{q}^{\perp}$, the number of Euclidean lines in $(\mathbb{F}_{q}^{k},\text{Euc}_{k})$, is given in \cref{dotline}. 
It is worthwhile to note that $[k]_{q}^{\perp}$ depends on the number theoretic condition of whether $q \equiv 1$ (mod $4$) or $q \equiv 3$ (mod $4$), and so do the Euclidean binomial coefficients.
This condition is equivalent to the case when $-1$ is a square or a non-square, which is required due to the fact that the discriminant of the hyperbolic plane is $-1$. More details can be found in the first proof of \cref{dotline}. 
For a computational argument, see the second proof of \cref{dotline}.
Additionally, we verify that the line counts of each type gives a new isometric invariant of quadratic spaces over finite fields (\cref{invariant}).

We refer to the reader that related combinatorial applications of the Euclidean binomial coefficients can be found in \cite{Yo2} for the associated pseudo-random graphs constructed by Euclidean subspaces of $(\mathbb{F}_{q}^{n},\text{Lor}_{n})$, and \cite{Yo3} for the incidence graphs between Euclidean subspaces of $(\mathbb{F}_{q}^{n},\text{Euc}_{n})$.
We also state some relevant counts from finite geometry.
The study of totally isotropic spaces over finite fields has been conducted by a considerable number of researchers for long time ago. 
(For the definition of isotropic spaces, see \cref{sec2}.)
For example, the number of  $k$-dimensional totally isotropic subspaces of a non-degenerate quadratic space $V$ is well-known. 
In addition, the set of maximal singular subspaces of $V$ constructs interesting graphs, which are distance-transitive, and related parameters can be computed. More details on this topic can be found in \cite{Mu, Sta}.

Next, we investigate various combinatorial properties of $\binom{n}{k}_{q}^{\perp}$, including the analogues of Pascal's identity  (\cref{pa} and \cref{pa2}) and log-concavity (\cref{lc}). We also show that the Euclidean binomial coefficient $\binom{n}{k}_{q}^{\perp}$ is expressed in terms of the product of $q$-binomial coefficients and some polynomials (\cref{polynomials}). For example, when $q \equiv 1$ (mod $4$), and $n,k$ are odd, it is written as
\[\binom{n}{k}_{q}^{\perp}=\frac{1}{2}q^{\frac{k(n-k)}{2}}(q^{\frac{n-k}{2}}+1)\binom{\frac{n-1}{2}}{\frac{k-1}{2}}_{q^{2}},\]
where \[\binom{n}{k}_{q^{2}}=\frac{((q^{2})^{n}-1)((q^{2})^{n-1}-1)\cdots((q^{2})^{n-k+1}-1)}{((q^{2})^{k}-1)((q^{2})^{k-1}-1)\cdots(q^{2}-1)}.\]
Furthermore, we prove that $\binom{n}{k}_{q}^{\perp}$ is a polynomial of degree $k(n-k)$ in $\frac{1}{2}\mathbb{Z}[q]$ (\cref{rational}) and the coefficients of the polynomial $\binom{n}{k}_{q}^{\perp}$ have some symmetry (\cref{symmetry}). For example, when $q \equiv 1$ (mod $4$), and $n,k$ are odd, the Euclidean binomial coefficients can be written as
\[\binom{n}{k}_{q}^{\perp}=q^{\frac{k(n-k)}{2}}\cdot \sum_{i=0}^{\frac{k(n-k)}{2}}a_{i}q^{i},\]
and we have the property $a_{i}=a_{k(n-k)/2-i}$ for $i=0,1,\ldots,k(n-k)/2$. 
In addition, we find the corresponding set theoretic analogue to $\binom{n}{k}_{q}^{\perp}$ on the set side in the following sense.
Since taking limits for $\binom{n}{k}_{q}$ as $q$ goes to $1$ reveals the connection between $k$-subsets of $[n]$ and $k$-dimensional subspaces of $\mathbb{F}_{q}^{n}$, it is reasonable to take limits for $\binom{n}{k}_{q}^{\perp}$ as $q$ goes to $1$. 
In Euclidean-analogues, we find the combinatorial correspondence between symmetric $k$-subsets of $\mathbb{Z}/(n+1)\mathbb{Z}$ and $k$-dimensional Euclidean subspaces of $\F$ taking limits for $\binom{n}{k}_{q}^{\perp}$ as $q$ goes to $1$ when $q\equiv 1$ (mod $4$) and as $q$ goes $-1$ when $q\equiv 3$ (mod $4$) instead of taking limits as $q$ goes to $1$ when $q \equiv 3$ (mod $4$) (\cref{symm}).

We also discuss how to compute the size of the orthogonal group $O(n,q)$ over $\mathbb{F}_{q}$ with Euc$_{n}$. Recall that the size of symmetric group $S_{n}$ is $n!$ and the size of the general linear group $GL(n,q)$ over $\mathbb{F}_{q}$ can be written by 
\[|GL(n,q)|=q^{n(n-1)/2}(q-1)^{n}\left [n \right]_{q}!.\]
Similarly, by using the number of flags of the Euclidean poset $E_{n}(q)$, the size of orthogonal group $O(n,q)$ over $\mathbb{F}_{q}$ (\cref{sizeofog}) can be given by 
\[|O(n,q)|=2^{n}\left [ n \right ]_{q}^{\perp}!,\]
and we also show that 
\[
\binom{n}{k}_{q}^{\perp}=\left | \frac{O(n,q)}{O(k,q)\times O(n-k,q)} \right |.
\]
This implies the following observation:
\[Gr_{q}^{\perp}(k,n)=\frac{O(n,q)}{O(k,q)\times O(n-k,q)} \subsetneq Gr_{q}(k,n)=\frac{GL(n,q)}{\bigl(\begin{smallmatrix}
A & C \\ 
\mathbf{0} & B
\end{smallmatrix}\bigr)},\]
where $Gr_{q}^{\perp}(k,n)$ is the set of $k$-dimensional Euclidean subspaces of $(\mathbb{F}_{q}^{n},\text{Euc}_{n})$, $Gr_{q}(k,n)$ is the set of $k$-dimensional subspaces of $\mathbb{F}_{q}^{n}$, $A \in GL(k,q)$ and $B \in GL(n-k,q)$ and $C$ is a $k \times (n-k)$ matrix. This is a different phenomenon from the real case since the set of $k$-dimensional subspaces of a vector space over real numbers, the real Grassmannian, is $O(n,\mathbb{R})/O(k,\mathbb{R})\times O(n-k,\mathbb{R})$. This is because it is not true in general that there exists an orthonormal basis of quadratic spaces over finite fields. 
In the last place of the section, we discuss how to compute the M\"{o}bius function of the Euclidean poset $E_{n}(q)$ (\cref{mobi}).

In \cref{section4}, we consider another non-degenerate quadratic form, the Lorentzian form, Lor$_{n}$. 
Let us call a $k$-dimensional subspace $W$ of $(\mathbb{F}_{q}^{n},\text{Euc}_{n})$ \textbf{Lorentzian} if $(W,\text{Euc}_{n}|_{W})$ is isometrically isomorphic to $(\mathbb{F}_{q}^{k},\text{Lor}_{k})$, where Euc$_{n}|_{W}$ is the restricted quadratic form on $W$. 
We provide formulas that count the number of $k$-dimensional Euclidean subspaces and Lorentzian subspaces of $(\mathbb{F}_{q}^{n},\text{Euc}_{n})$, and $(\mathbb{F}_{q}^{n}, \text{Lor}_{n})$, respectively (\cref{other counts}).  
Let us denote them by $\binom{n}{k}_{q}^{\perp}, \binom{n}{\overline{k}}_{q}^{\perp}, \binom{\overline{n}}{k}_{q}^{\perp}$, and $\binom{\overline{n}}{\overline{k}}_{q}^{\perp}$, respectively.
We show that $\binom{n}{\overline{k}}_{q}^{\perp}$ is a polynomial in $q$ whose coefficients are half-integers (\cref{polynomial2}).
Furthermore, we verify that $\binom{n}{k}_{q}^{\perp}+\binom{n}{\overline{k}}_{q}^{\perp}$ is a polynomial with non-negative integer coefficients except for some cases (\cref{prop: nondeg}).
We finish this section by proving that the Lorentzian poset $LO_{n}(q)$, consisting of all Lorentzian subspaces of $\F$ under set-inclusion, is rank-symmetric (\cref{prop: LO1}) and rank-unimodal (\cref{prop: LO2}).

Last but not the least, we discuss a possible direction of future research from the Euclidean binomial coefficient in \cref{Future}.

\bigskip
\noindent \textbf{Acknowledgements.} This work is a part of the author's Ph.D. dissertation. The author would like to express her great gratitude to her thesis advisor Jonathan Pakianathan for helpful discussions and encouragement for this work. The author also would like to thank Harry Richman for pointing out an error in an initial draft, Ferdinand Ihringer for helpful suggestions on this work, IBS Discrete Mathematics Group for suggesting better terminologies for this work, and Mark Bly for suggesting to define multibinomial coefficients. 
Lastly, the author appreciates the referee's valuable comments on this paper, which significantly improved the quality of the paper.
The author is supported by the KIAS Individual Grant (CG082701) at Korea Institute for Advanced Study.

\section{Review of the theory of quadratic forms}\label{sec2}

In this section, we briefly remind the reader of some basic concepts of quadratic forms that we use later, and mainly follow \cite{Cl, Co, Se}. Readers familiar with the theory of quadratic forms can skip this section.

Let $V$ be a $n$-dimensional vector space over a field $F$ with char$(F)\ne 2$. A \textbf{quadratic form} $Q$ on $V$ is a function from $V$ to $F$ satisfying the following two conditions: $Q(cv)=c^{2}Q(v)$ for any $v \in V,c\in F$ and  $B(v,w)=\frac{1}{2}(Q(v+w)-Q(v)-Q(w))$ is bilinear. 
We call $B$ the \textbf{bilinear form associated with} $Q$ and dim$V$ the \textbf{dimension of the quadratic form} $Q$. 
It is easy to check that $B$ is symmetric. 
Once we fix a basis of $V$, quadratic forms on $V$ can be expressed by a unique matrix form. We call such a matrix $M$ the \textbf{matrix associated with }$Q$ in this basis. Furthermore, there are canonical bijections among the following sets in a chosen basis:
\renewcommand\labelitemi{\tiny$\bullet$}
\begin{itemize}
\item The set of homogeneous polynomials of degree $2$, in $n$-variables.
\item The set of quadratic forms on $V$.
\item The set of symmetric bilinear forms on $V$.
\item The set of symmetric $n\times n$ matrices over $F$.
\end{itemize}
We will use any of these definitions of quadratic forms as occasion demands.

Next, we define a special class of quadratic forms, which forms the building blocks of quadratic forms. A quadratic form is called \textbf{non-degenerate} if a matrix representation of it is invertible. If its determinant is zero, we call a quadratic form \textbf{degenerate}. Two quadratic forms $Q_{1}, Q_{2}$ on $V$ are \textbf{equivalent} if there is a linear isomorphism $A:V\longrightarrow V$ such that $Q_{2}(Av)=Q_{1}(v)$ for any $v$ in $V$. For example, the two quadratic forms  $Q(x,y)=x^{2}-y^{2}~~\text{and}~~Q'(x,y)=xy$ on $\mathbb{R}^{2}$ are equivalent by $A=\bigl(\begin{smallmatrix}
1 & 1\\ 
1 & -1
\end{smallmatrix}\bigr)$.

A natural problem is to determine the classification of quadratic forms up to equivalence. We restrict the base field to a finite field. 
In \cref{introduction}, we already considered two types of the classification of non-degenerate quadratic forms over finite fields up to equivalence: \cref{classi1} and \cref{classi2}.
The proofs can be found in \cite{Co}.

The \textbf{discriminant} $d(Q)$ of $Q$ is the coset of det$(M)$ in $\mathbb{F}_{q}^{*}/\mathbb{F}_{q}^{*2}$, where $M$ is a matrix associated with $Q$. We have the following corollary directly by the classification \cref{classi1}.  
\begin{corollary}\label{coro}\cite{Se}
Two non-degenerate quadratic forms over $\mathbb{F}_{q}$ are equivalent if and only if they have the same dimension and same discriminant.
\end{corollary}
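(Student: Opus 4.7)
The plan is to deduce both directions directly from \cref{c2}, using only the standard transformation rule for the matrix of a quadratic form under a linear isomorphism.

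For the forward implication, suppose $Q_1$ and $Q_2$ on $V$ are equivalent via $A:V\to V$ with $Q_2(Av)=Q_1(v)$. That $Q_1$ and $Q_2$ have the same dimension is immediate since $A$ is an isomorphism of $V$. To compare discriminants, fix a basis and let $M_1,M_2$ be the matrices associated with $Q_1,Q_2$. Polarizing $Q_i$ to its associated symmetric bilinear form and expanding the identity $Q_2(Av)=Q_1(v)$ gives $M_1=A^{T}M_2A$, so $\det M_1=(\det A)^2 \det M_2$. Since $(\det A)^2\in \mathbb{F}_{q}^{\times 2}$, the cosets $\det M_1\cdot \mathbb{F}_q^{\times 2}$ and $\det M_2\cdot \mathbb{F}_q^{\times 2}$ coincide, so $d(Q_1)=d(Q_2)$. (Applied with $Q_1=Q_2$ and a change of basis, the same computation shows $d(Q)$ is well defined independently of the chosen basis.)

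For the backward implication, suppose $Q_1$ and $Q_2$ have common dimension $n$ and common discriminant. By \cref{c2}, each $Q_i$ is equivalent either to $\mathrm{dot}_n$ or to $\lambda\mathrm{dot}_n$ for a fixed non-square $\lambda$. The matrix of $\mathrm{dot}_n$ in the standard basis is $I_n$, with determinant $1\in \mathbb{F}_q^{\times 2}$, so $d(\mathrm{dot}_n)$ is the trivial coset. The matrix of $\lambda\mathrm{dot}_n$ is $\mathrm{diag}(1,\dots,1,\lambda)$, with determinant $\lambda\notin \mathbb{F}_q^{\times 2}$, so $d(\lambda\mathrm{dot}_n)$ is the non-trivial coset. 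In particular $\mathrm{dot}_n$ and $\lambda\mathrm{dot}_n$ have distinct discriminants, so by the forward implication they are inequivalent; and the discriminant of $Q_i$ tells us which of the two classes $Q_i$ lies in. Since $d(Q_1)=d(Q_2)$, both $Q_1$ and $Q_2$ are equivalent to the same canonical form, and hence (by transitivity of equivalence) to each other.

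There is no real obstacle here: the statement is essentially a packaging of \cref{c2}. The only substantive input is the observation $M_1=A^TM_2A$, which simultaneously shows that $d(Q)$ is well defined and that it is an equivalence invariant; the rest is the trivial computation that the two canonical forms have different discriminants, so that the invariants (dimension, discriminant) distinguish exactly the two equivalence classes in each dimension predicted by \cref{c2}.
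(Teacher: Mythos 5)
Your proof is correct and follows exactly the route the paper intends: the paper gives no written argument beyond "directly by \cref{c2}", and your write-up is precisely that argument spelled out — invariance of dimension and of the discriminant via $M_1=A^{T}M_2A$, plus the observation that the two canonical forms of \cref{c2} have distinct square classes of determinant. Nothing is missing.
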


Note that the cases in the classification \cref{classi2} correspond to one or more of these cases in the classification \cref{classi1}.
This can be determined by considering their discriminants.
In the following proposition, we observe when parabolic spaces are equivalent to either the Euclidean space or the Lorentzian space.

\begin{proposition}\cite{Yo2}\label{corresp} 
Let $k$ be a positive integer. 
\begin{enumerate}
\item[\textup{(1)}] $k\mathbb{H} \oplus x^{2}$ is equivalent to \textup{Euc}$_{2k+1}$ if $q \equiv 1 \pmod{4}$ , or $q \equiv 3 \pmod{4}$ and $k$ is even,
\item[\textup{(2)}] $k\mathbb{H} \oplus x^{2}$ is equivalent to \textup{Lor}$_{2k+1}$ if $q \equiv 3 \pmod{4}$ and $k$ is odd,
\item[\textup{(3)}] $k\mathbb{H} \oplus \lambda x^{2}$ is equivalent to \textup{Euc}$_{2k+1}$ if $q \equiv 3 \pmod{4}$ and $k$ is odd,
\item[\textup{(4)}] $k\mathbb{H}\oplus \lambda x^{2}$ is equivalent to \textup{Lor}$_{2k+1}$ if $q \equiv 1 \pmod{4}$, or $q \equiv 3 \pmod{4}$ and $k$ is even.
\end{enumerate}
\end{proposition}

We now consider a quadratic form with a space. A \textbf{quadratic space} $(V,Q)$ is a vector space equipped with a quadratic form $Q$ on $V$. We call it \textbf{non-degenerate} when the underlying quadratic form is non-degenerate. Two quadratic spaces $(V_{1},Q_{1})$ and $(V_{2},Q_{2})$ are called  \textbf{isometrically isomorphic} if there is a linear isomorphism $A:V_{1} \longrightarrow V_{2}$ such that $Q_{2}(Av)=Q_{1}(v)$ for any $v$ in $V_{1}$. In this case, we call the map $A$ an \textbf{isometry}. When the map $A$ is injective, we say $A$ is an \textbf{isometric embedding}. The \textbf{category of quadratic spaces} over $F$ has as its objects quadratic spaces and morphisms isometric embeddings between quadratic spaces. Let $W$ be a vector subspace of $(V,Q)$ and $Q|_{W}$ be the restriction of $Q$ to $W$. One can check that the inclusion $(W,Q|_{W})$ in $(V,Q)$ is an isometric embedding. Thus, we say that $(W,Q|_{W})$ is a \textbf{quadratic subspace} of $(V,Q)$. In addition, given two quadratic spaces $(V_{1},Q_{1})$ and $(V_{2},Q_{2})$, one can construct a new quadratic space $(V_{1} \oplus V_{2},Q|_{V_{1}\oplus V_{2}})$ with $Q|_{V_{1}\oplus V_{2}}(v_{1},v_{2}):=Q|_{V_{1}}(v_{1})+Q|_{V_{2}}(v_{2})$. 

\begin{example}\label{all2}
Let us consider $W=\left \langle (1,0,0),(0,1,1) \right \rangle$ in $(\mathbb{F}_{3}^{3},\text{Euc}_{3}) $. Here, $\left \langle v_{1},v_{2},\ldots,v_{n} \right \rangle$ denotes the vector space spanned by $v_{1},v_{2},\ldots,v_{n}$. We show that $(W,\text{Euc}_{3}|_{W})$ is a $2$-dimensional Lorentzian subspace of $(\mathbb{F}_{3}^{3},\text{Euc}_{3}) $ and provide the description of all $2$-dimensional quadratic subspaces of $(\mathbb{F}_{3}^{3},\text{Euc}_{3}) $ in \cref{the description of all 2-dim}.

 For $e_{1}=(1,0,0), e_{2}=(0,1,1)$, and $\beta=\left\{ e_{1},e_{2} \right\}$, we consider the matrix associated $[\text{Euc}_{3}|_{W}]_{\beta}$ with $B:=\text{Euc}_{3}|_{W}$ in the basis $\beta$:
\begin{align*}
[\text{Euc}_{3}|_{W}]_{\beta}&=\begin{pmatrix}
B(e_{1},e_{1}) & B(e_{1},e_{2})/2\\ 
B(e_{2},e_{1})/2 & B(e_{2},e_{2})
\end{pmatrix}\\&=\begin{pmatrix}
(1,0,0)\cdot (1,0,0) & (1,0,0)\cdot (0,1,1)/2\\ 
(0,1,1) \cdot (1,0,0)/2 & (0,1,1)\cdot (0,1,1)
\end{pmatrix}
=\begin{pmatrix}
1 & 0\\ 
0 & 2
\end{pmatrix}.
\end{align*}
Since disc($\text{Euc}_{3}|_{W})=2$ and $2$ is a non-square in $\mathbb{F}_{3}$, $(W,\text{Euc}_{3}|_{W})$ is a $2$-dimensional Lorentzian subspace of $(\mathbb{F}_{3}^{3},\text{Euc}_{3}) $ by \cref{coro} and \cref{classi1}. 
Similarly, one can verify the types of other $2$-dimensional quadratic subspaces of $(\mathbb{F}_{3}^{3},\text{Euc}_{3})$. Here is the description of all $2$-dimensional subspaces $p_{i}$ of $\mathbb{F}_{3}^{3}$ for $i=1,2,\ldots,13$ using lines in them and omit the zero vector in each $p_{i}$.
\begin{table}[H]
\renewcommand{\arraystretch}{1.1}
\begin{center}
\scalebox{0.9}{
\begin{tabular}{c||c|c}
 & \textbf{Lines in the planes $p_{i}$} &  \textbf{Types of the planes $p_{i}$} \\ \hline \hline
$p_{1}$ & $\left \{ \left \langle (1,0,0) \right \rangle,\left \langle (0,1,0) \right \rangle, \left \langle (1,1,0) \right \rangle, \left \langle (1,2,0) \right \rangle \right \}$  & \multirow{3}{*}{$\text{Euclidean}$}  \\ \cline{1-2}
$p_{2}$ & $\left \{ \left \langle (1,0,0) \right \rangle,\left \langle (0,0,1) \right \rangle, \left \langle (1,0,1) \right \rangle, \left \langle (1,0,2) \right \rangle \right \}$  &  \\ \cline{1-2}
$p_{3}$ & $\left \{ \left \langle (0,1,0) \right \rangle,\left \langle (0,0,1) \right \rangle, \left \langle (0,1,1) \right \rangle, \left \langle (0,1,2) \right \rangle \right \}$  &  \\ \hline
$p_{4}$ & $\left \{ \left \langle (1,0,0) \right \rangle,\left \langle (0,1,1) \right \rangle, \left \langle (1,1,1) \right \rangle, \left \langle (1,2,2) \right \rangle \right \}$  & \multirow{6}{*}{$\text{Lorentzian}$}  \\ \cline{1-2}
$p_{5}$ & $\left \{ \left \langle (1,0,0) \right \rangle,\left \langle (0,1,2) \right \rangle, \left \langle (1,1,2) \right \rangle, \left \langle (1,2,1) \right \rangle \right \}$  &  \\ \cline{1-2}
$p_{6}$ & $\left \{ \left \langle (0,1,0) \right \rangle,\left \langle (1,0,1) \right \rangle, \left \langle (1,1,1) \right \rangle, \left \langle (1,2,1) \right \rangle \right \}$  &  \\ \cline{1-2}
$p_{7}$ & $\left \{ \left \langle (0,1,0) \right \rangle,\left \langle (1,0,2) \right \rangle, \left \langle (1,1,2) \right \rangle, \left \langle (1,2,2) \right \rangle \right \}$  & \\ \cline{1-2}
$p_{8}$ & $\left \{ \left \langle (0,0,1) \right \rangle,\left \langle (1,1,0) \right \rangle, \left \langle (1,1,1) \right \rangle, \left \langle (1,1,2) \right \rangle \right \}$  &  \\ \cline{1-2}
$p_{9}$ & $\left \{ \left \langle (0,0,1) \right \rangle,\left \langle (1,2,0) \right \rangle, \left \langle (1,2,1) \right \rangle, \left \langle (1,2,2) \right \rangle \right \}$  &  \\ \hline
$p_{10}$ & $\left \{ \left \langle (1,0,1) \right \rangle,\left \langle (0,1,1) \right \rangle, \left \langle (1,1,2) \right \rangle, \left \langle (1,2,0) \right \rangle \right \}$  & \multirow{4}{*}{$\text{degenerate}$}  \\ \cline{1-2}
$p_{11}$ & $\left \{ \left \langle (1,0,1) \right \rangle,\left \langle (1,1,0) \right \rangle, \left \langle (1,2,2) \right \rangle, \left \langle (0,1,2) \right \rangle \right \}$  &  \\ \cline{1-2}
$p_{12}$ & $\left \{ \left \langle (1,1,0) \right \rangle,\left \langle (0,1,1) \right \rangle, \left \langle (1,2,1) \right \rangle, \left \langle (1,0,2) \right \rangle \right \}$  &  \\ \cline{1-2}
$p_{13}$ & $\left \{ \left \langle (1,1,1) \right \rangle,\left \langle (1,2,0) \right \rangle, \left \langle (1,0,2) \right \rangle, \left \langle (0,1,2) \right \rangle \right \}$  &  \\ 
\end{tabular}}
\caption{The description of all $2$-dimensional quadratic subspaces of $\mathbb{F}_{3}^{3}$}
\label{the description of all 2-dim}
\end{center}
\end{table}
\end{example}

Let $(V,Q)$ be a quadratic space and $B$ be the bilinear form associated with $Q$. We say two subspaces $W_{1},W_{2}$ are \textbf{orthogonal} $W_{1} \perp W_{2}$ if we have $Q(w_{1}+w_{2})=Q(w_{1})+Q(w_{2})$ for any $w_{1}$ in $W_{1}$ and $w_{2}$ in $W_{2}$, equivalently, $B(w_{1},w_{2})=0$. Let $W$ be a subspace of $(V,Q)$. We define the \textbf{orthogonal complement}
\[W^{\perp}=\left \{ v \in V~|~B(v,w)=0~\text{for any }w\in W \right \}.\]  
One can show that $\text{dim}W+\text{dim}W^{\perp}=\text{dim}V$, and $(W^{\perp})^{\perp}=W$, which implies that taking $\perp$ is bijective.
In addition, we state some useful facts about non-degenerate quadratic spaces using the orthogonal complement. 
The following are equivalent: (1) $W$ is non-degenerate, (2) $W \cap W^{\perp}=0$,  (3) $W^{\perp}$ is non-degenerate, and (4) $V=W \oplus W^{\perp}$.

We define special classes of non-degenerate quadratic spaces. Let $(V,Q)$ be a non-degenerate quadratic space. A vector space $V$ is said to be \textbf{isotropic} if there exists a nonzero vector $v$ in $V$ satisfying $Q(v)=0$, and otherwise \textbf{anisotropic}. 
Let us call a subspace $W$ of $(V,Q)$ \textbf{totally isotropic}  if $Q|_{W}= 0$. 
We next introduce special non-degenerate quadratic spaces. 
The \textbf{hyperbolic plane} $\mathbb{H}$ is a $2$-dimensional quadratic space where the quadratic form is equivalent to $xy$ (or $x^{2}-y^{2}$). 
A quadratic space is \textbf{hyperbolic} if it is isometrically isomorphic to a direct sum of hyperbolic planes.

Finally, we state the fundamental results in the algebraic theory of quadratic forms. There are two equivalent statements of Witt's result: one is \textbf{Witt's cancellation theorem} and the other is \textbf{Witt's extension theorem}. We omit the proofs. They can be found in \cite{Cl}.

\begin{theorem}[Witt's cancellation theorem]\label{wc}\cite{Cl}
Let $U_{1},U_{2},V_{1},V_{2}$ be quadratic spaces where $V_{1}$ and $V_{2}$ are isometrically isomorphic. If $U_{1}\oplus V_{1} \cong U_{2} \oplus V_{2}$, then $U_{1} \cong U_{2}$. Here, $\cong$ means isometrically isomorphic of quadratic spaces.
\end{theorem}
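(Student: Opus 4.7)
The plan is to prove Witt's cancellation by an induction that reduces to a one-dimensional cancellation step handled by reflections. Using the given isometry $V_1 \cong V_2$, I first identify $V_1$ and $V_2$ with a common quadratic space $V$, so the hypothesis becomes $U_1 \oplus V \cong U_2 \oplus V$ and the goal is $U_1 \cong U_2$. Assuming the spaces are non-degenerate (the case to which one reduces), I induct on $\dim V$, the case $\dim V = 0$ being trivial. For the inductive step I would choose an anisotropic vector $v \in V$ (which exists because $V$ is nonzero non-degenerate and $\mathrm{char}\, F \neq 2$, by polarization) and split $V = F v \oplus V'$ with $V' = (Fv)^\perp$ in $V$. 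Then $U_i \oplus V \cong (U_i \oplus V') \oplus Fv$, and the problem reduces to the one-dimensional case applied to $U_1 \oplus V'$ and $U_2 \oplus V'$, after which the inductive hypothesis applied over $V'$ yields $U_1 \cong U_2$.

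The core of the argument is the one-dimensional cancellation: given an isometry $\phi: W_1 \to W_2$, where $W_i := U_i \oplus Fv$ and $Q(v) = a \neq 0$, produce an isometry $U_1 \cong U_2$. My plan is to construct an auto-isometry $\sigma$ of $W_2$ sending $\phi(v)$ to $v$; then $\sigma \circ \phi: W_1 \to W_2$ is an isometry fixing $v$, so it restricts to an isometry from $(Fv)^\perp \cap W_1 = U_1$ onto $(Fv)^\perp \cap W_2 = U_2$. Both $v$ and $\phi(v)$ have quadratic value $a$ in $W_2$, and a direct computation gives
\[
Q(\phi(v) - v) + Q(\phi(v) + v) = 4a \neq 0,
\]
so at least one of $\phi(v) \pm v$ is anisotropic. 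If $w := \phi(v) - v$ is anisotropic, the reflection $\tau_w(x) = x - \tfrac{2 B(x, w)}{Q(w)} w$ sends $\phi(v)$ to $v$ and I take $\sigma = \tau_w$; if instead only $w' := \phi(v) + v$ is anisotropic, then $\tau_{w'}$ sends $\phi(v)$ to $-v$, and $\sigma := \tau_v \circ \tau_{w'}$ corrects the sign (here $\tau_v$ is an auto-isometry of $W_2$ fixing $U_2$ pointwise and negating $v$).

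The main obstacle, and the reason one cannot simply ``project $\phi$ onto $U_2$'', is that $\phi$ need not respect the direct-sum decomposition: $\phi(v)$ typically mixes $U_2$ with $Fv$, and $\phi(U_1)$ need not equal $U_2$. Over $\mathbb{R}$ such obstructions can be removed by Gram--Schmidt, but over $\mathbb{F}_q$ the abundance of isotropic vectors blocks any orthonormalization (as noted in the introduction, $Gr_d(k,n) \subsetneq Gr_q(k,n)$), so the reflection trick is essentially the only clean device. The argument requires $\mathrm{char}\, F \neq 2$ in two places: to ensure $4a \neq 0$ so that some $\phi(v) \pm v$ is anisotropic, and to define reflections $\tau_w$. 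A final point to verify is that the polarization argument genuinely produces an anisotropic vector in any nonzero non-degenerate $V$: if every $v \in V$ satisfied $Q(v) = 0$, then $2 B(v, w) = Q(v + w) - Q(v) - Q(w) = 0$ for all $v, w$, contradicting non-degeneracy.
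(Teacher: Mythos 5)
Your argument is correct. Note that the paper does not actually prove this statement---it explicitly omits the proof and defers to \cite{Cl}---and what you give is essentially the standard argument found there: identify $V_{1}$ and $V_{2}$, induct on $\dim V$ by splitting off an anisotropic vector, and settle the one-dimensional cancellation with reflections, using $Q(\phi(v)-v)+Q(\phi(v)+v)=4a\neq 0$ to guarantee that at least one of $\tau_{\phi(v)-v}$, $\tau_{v}\circ\tau_{\phi(v)+v}$ is available. The individual steps check out: the reflection computations, the identification $(Fv)^{\perp}\cap W_{i}=U_{i}$ (valid since $B(v,v)=Q(v)=a\neq 0$, so the functional $x\mapsto B(x,v)$ is nonzero), and the polarization argument producing an anisotropic vector in any nonzero non-degenerate space when $\mathrm{char}\,F\neq 2$. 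The one point you assert rather than prove is the reduction to non-degenerate $V$; in this paper every space being cancelled is non-degenerate, so nothing is lost, but for the general statement you would add a line on cancelling the radical (the isometry class is determined by the dimension of the radical together with the induced non-degenerate form on the quotient).
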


\begin{theorem}[Witt's extension theorem]\label{we}\cite{Cl}
Suppose $X_{1} \cong X_{2}$, where $X_{1}=U_{1} \oplus V_{1}$, and $X_{2}=U_{2}\oplus V_{2},$ and suppose that $f:V_{1} \longrightarrow V_{2}$ an isometry. Then, there is an isometry $F:X_{1}\longrightarrow X_{2}$ such that $F|_{V_{1}}=f$ and $F(U_{1})=U_{2}$.
\end{theorem}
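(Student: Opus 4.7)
The plan is to derive Witt's extension theorem as a direct consequence of Witt's cancellation theorem (\cref{wc}), which has just been stated. The key point is that in the theory of quadratic spaces developed here, the notation $\oplus$ for quadratic subspaces refers to the orthogonal direct sum: the paper explicitly constructs $(V_{1} \oplus V_{2}, Q_{V_{1} \oplus V_{2}})$ with $Q(v_{1},v_{2}) = Q_{V_{1}}(v_{1}) + Q_{V_{2}}(v_{2})$. Under this reading, the decompositions $X_{i} = U_{i} \oplus V_{i}$ are orthogonal, so $B_{X_{i}}(u,v) = 0$ whenever $u \in U_{i}$ and $v \in V_{i}$.

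First I would observe that the given isometry $f \colon V_{1} \to V_{2}$ witnesses $V_{1} \cong V_{2}$ as quadratic spaces. Combined with the hypothesis $X_{1} \cong X_{2}$, the isomorphism $U_{1} \oplus V_{1} \cong U_{2} \oplus V_{2}$ lets me invoke Witt's cancellation theorem (\cref{wc}) to produce an isometry $g \colon U_{1} \to U_{2}$. Next I would define $F \colon X_{1} \to X_{2}$ by
\[
F(u + v) := g(u) + f(v), \qquad u \in U_{1}, \; v \in V_{1},
\]
which is well-defined and linear because $X_{1} = U_{1} \oplus V_{1}$ gives each element of $X_{1}$ a unique decomposition. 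Using orthogonality in both $X_{1}$ and $X_{2}$, and the fact that $g,f$ are isometries,
\[
Q_{X_{1}}(u+v) = Q_{U_{1}}(u) + Q_{V_{1}}(v) = Q_{U_{2}}(g(u)) + Q_{V_{2}}(f(v)) = Q_{X_{2}}(F(u+v)),
\]
so $F$ is an isometry. The required conditions $F|_{V_{1}} = f$ (set $u = 0$) and $F(U_{1}) = g(U_{1}) = U_{2}$ are then immediate from the definition.

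The main subtlety here is interpretive rather than computational: once $\oplus$ is read as the orthogonal direct sum (which is the reading consistently used in the section), the argument collapses to a one-line application of cancellation plus a trivial assembly. If one instead wanted a proof independent of \cref{wc}, the standard route would be an induction on $\dim V_{1}$, with the base case relying on Cartan--Dieudonn{\'e}-style reflections $\sigma_{v-w}$ through an anisotropic vector $v - w$ (noting $Q(v-w) + Q(v+w) = 4Q(v)$ so at least one of $v \pm w$ is anisotropic when $Q(v) \neq 0$), and with a separate argument for isotropic vectors via completing them to a hyperbolic plane inside the ambient non-degenerate space.
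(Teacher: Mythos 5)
Your argument is correct for the statement as it appears here, but note that the paper itself gives no proof at all: both \cref{wc} and \cref{we} are quoted from \cite{Cl} with the remark that the proofs are omitted, so you are supplying an argument where the paper defers to the literature. Your route is the standard one for seeing the two statements as equivalent: since the paper's $\oplus$ of quadratic spaces is by definition the orthogonal sum (so $B(u,v)=0$ for $u\in U_{i}$, $v\in V_{i}$), cancellation applied to $U_{1}\oplus V_{1}\cong U_{2}\oplus V_{2}$ with $V_{1}\cong V_{2}$ yields an isometry $g\colon U_{1}\to U_{2}$, and $F=g\oplus f$ preserves the form exactly by the computation you give; you should also record that $F$ is bijective (immediate, since $g$ and $f$ are isomorphisms onto $U_{2}$ and $V_{2}$ and $X_{2}=U_{2}\oplus V_{2}$), because an isometry here is required to be a linear isomorphism. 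Your interpretive point is genuinely needed: with a merely linear (non-orthogonal) direct sum the statement is false --- e.g.\ in a hyperbolic plane with $Q(xe+yf)=xy$, take $V_{1}=V_{2}=\mathrm{span}(e)$, $U_{1}=\mathrm{span}(f)$, $U_{2}=\mathrm{span}(e+f)$; no isometry fixing $e$ can carry the isotropic line $U_{1}$ onto the anisotropic line $U_{2}$. The only caution is logical order: extension and cancellation are "two equivalent statements of Witt's result," and the converse implication is just as immediate (restrict $F$ to $U_{1}$), so your derivation is non-circular only because \cref{wc} is established independently in the cited source (via reflection arguments of the Cartan--Dieudonn\'e type you sketch at the end); within this paper, which takes \cref{wc} as given, that is a legitimate and clean way to obtain \cref{we}.
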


\section{The Euclidean poset $E_{n}(q)$ and the Euclidean binomial coefficient $\binom{n}{k}_{q}^{\perp}$}\label{section3}
In this section, we study various combinatorial properties of the Euclidean poset and the Euclidean binomial coefficients. 

\subsection{Rank symmetry, unimodality of $E_{n}(q)$, and log-concavity of $\binom{n}{k}_{q}^{\perp}$}

Let us define a poset $E_{n}(q):=(\mathcal{E},\subset)$, where $\mathcal{E}$ is the set of all Euclidean subspaces of $(\mathbb{F}_{q}^{n},\text{Euc}_{n})$ and we call it the \textbf{Euclidean poset} of rank $n$. In $E_{n}(q)$, we do not consider the empty set to be a subspace, but we consider the zero space as the least element of $E_{n}(q)$. 

\begin{example} Let us consider Euclidean subspaces in $(\mathbb{F}_{3}^{3},\text{Euc}_{3})$. There are three Euclidean lines $\ell_{1}=\left \langle (1,0,0) \right \rangle, ~\ell_{2}=\left \langle (0,1,0) \right \rangle, ~\ell_{3}=\left \langle (0,0,1) \right \rangle$ and three Euclidean planes $P_{1}=\left \langle (1,0,0),(0,1,0) \right \rangle,$ $P_{2}=\left \langle (1,0,0),(0,0,1) \right \rangle, ~P_{3}=\left \langle (0,1,0),(0,0,1) \right \rangle$. 
The formulas for the number of Euclidean lines and planes are discussed in \cref{subsp} and \cref{dotline}. 
We have the Hasse diagram of the Euclidean poset $E_{3}(3)$ in \cref{pic: Hasse}, which is isomorphic to the Boolean algebra $B_{3}$.
\end{example}

Recall that $|\text{Euc}_{k},\text{Euc}_{n}|_{q}$ is the number of $k$-dimensional Euclidean subspaces of $(\mathbb{F}_{q}^{n},\text{Euc}_{n})$.
In \cref{subsp}, we will show that $|\text{Euc}_{k},\text{Euc}_{n}|_{q}$ can be written as an analogue of binomial coefficients, and thus we will denote it by $\binom{n}{k}_{q}^{\perp}$ again later.
We first prove that $E_{n}(q)$ is rank-symmetric. 

\begin{proposition}\label{rs}
The Euclidean poset $E_{n}(q)$ is rank-symmetric. i.e. $|\textup{Euc}_{k},\textup{Euc}_{n}|_{q}=|\textup{Euc}_{n-k},\textup{Euc}_{n}|_{q}$.
\end{proposition}

\begin{center}
\begin{tikzpicture}[scale=0.5]
  \node (max) at (0,4) {$\mathbb{F}_{3}^{3}$};
  \node (a) at (-2,2) {$P_{1}$};
  \node (b) at (0,2) {$P_{2}$};
  \node (c) at (2,2) {$P_{3}$};
  \node (d) at (-2,0) {$\ell_{1}$};
  \node (e) at (0,0) {$\ell_{2}$};
  \node (f) at (2,0) {$\ell_{3}$};
  \node (min) at (0,-2) {$0$};
  \draw (min) -- (d) -- (a) -- (max) -- (b) -- (f)
  (e) -- (min) -- (f) -- (c) -- (max)
  (d) -- (b);
  \draw[preaction={draw=white, -,line width=6pt}] (a) -- (e) -- (c);
\end{tikzpicture}
\captionof{figure}{The Hasse diagram of $E_{3}(3)$}\label{pic: Hasse}
\end{center}

\begin{proof}
Let $E_{1}$ be a $k$-dimensional Euclidean subspace of $\F$. Then, there is a $(n-k)$-dimensional Euclidean subspace $E_{2}$ of $\F$ such that
\[E_{1}\oplus E_{2}=\F=E_{1}\oplus E_{1}^{\perp}.\] 
By Witt's cancellation theorem, we obtain $E_{2}=E_{1}^{\perp}$. Since taking $\perp$ is bijective, the result holds. 
\end{proof}

Next, we show that $E_{n}(q)$ is rank-unimodal. 
To do this, we give a formula for $|\text{Euc}_{k},\text{Euc}_{n}|_{q}$, called the Euclidean binomial coefficient.   
We first consider a useful lemma to achieve this goal. 

\begin{lemma}\label{32}For any $1 \le k \le n$, the number of $k$-dimensional Euclidean subspaces of $(\mathbb{F}_{q}^{n},\textup{Euc}_{n})$ containing a fixed Euclidean line of $(\mathbb{F}_{q}^{n},\textup{Euc}_{n})$ is $|\textup{Euc}_{k-1},\textup{Euc}_{n-1}|_{q}$.
\end{lemma}

\begin{proof} By Witt's extension theorem, the number of $k$-dimensional Euclidean subspaces of $(\mathbb{F}_{q}^{n},\text{Euc}_{n})$ containing a Euclidean line is independent of which Euclidean line is chosen. Let $L$ be a Euclidean line. Then, we have the following bijection map:
\begin{align*}
\begin{Bmatrix}
\text{$(k-1)$-dimensional Euclidean subspaces} \\
\text{in }(\mathbb{F}_{q}^{n-1},\text{Euc}_{n-1})
\end{Bmatrix}
 ~~~&\longrightarrow~~~\begin{Bmatrix}
 k\text{-dimensional Euclidean subspaces in }\\
\F \text{ containing }L
\end{Bmatrix}\\
W \quad \quad \quad \quad \quad \quad \quad \quad \quad \quad &\mapsto \quad \quad \quad \quad \quad \quad \quad \quad \quad  L \oplus W.
\end{align*}
It follows that this map is bijective by its definition.
\end{proof}

\begin{theorem}\label{subsp} For any $1 \leq k \leq n$, we have that
\begin{equation}\label{dotb}
|\textup{Euc}_{k},\textup{Euc}_{n}|_{q}=\frac{|\textup{Euc}_{1},\textup{Euc}_{n}|_{q}|\textup{Euc}_{1},\textup{Euc}_{n-1}|_{q}\cdots|\textup{Euc}_{1},\textup{Euc}_{n-k+1}|_{q}}{|\textup{Euc}_{1},\textup{Euc}_{k}|_{q}\cdots|\textup{Euc}_{1},\textup{Euc}_{1}|_{q}}.
\end{equation}
\end{theorem}
\begin{proof}
We first claim that
\begin{equation}\label{imp}
    |\textup{Euc}_{1},\textup{Euc}_{k}|_{q}|\textup{Euc}_{k},\textup{Euc}_{n}|_{q}=|\textup{Euc}_{1},\textup{Euc}_{n}|_{q}|\textup{Euc}_{k-1},\textup{Euc}_{n-1}|_{q}.
\end{equation}
To prove the claim, we count the number of flags $0 \subset V_{1} \subset V_{2}$ in two different ways, where $V_{1}$ is a Euclidean lines of $\F$ and $V_{2}$ is a $k$-dimensional Euclidean subspaces of $\F$. Note that 
\begin{align*}
|\text{Euc}_{1},\text{Euc}_{k}|_{q}&=\text{the number of Euclidean lines of }(\mathbb{F}_{q}^{n},\text{Euc}_{k}),\\
|\text{Euc}_{k},\text{Euc}_{n}|_{q}&=\text{the number of }k\text{-dimensional Euclidean } \text{subspaces of }(\mathbb{F}_{q}^{n},\text{Euc}_{n}),\\
|\text{Euc}_{1},\text{Euc}_{n}|_{q}&=\text{the number of Euclidean lines of }(\mathbb{F}_{q}^{n},\text{Euc}_{n}),\\
|\text{Euc}_{k-1},\text{Euc}_{n-1}|_{q}&=\text{the number of }k\text{-dimensional Euclidean }\text{subspaces of }  \F  \\ & \quad \; \text{containing a fixed Euclidean line in } (\mathbb{F}_{q}^{n},\text{Euc}_{n}).
\end{align*}

The last equality is due to \cref{32}. First, we choose a $k$-dimensional Euclidean subspace of $\F$, and then find a Euclidean line inside it. One important remark is that all $k$-dimensional Euclidean subspaces of $\F$ are the same as the abstract $k$-dimensional Euclidean space by Witt's extension theorem. The second way is firstly to choose a Euclidean line of $\F$, and pick a $k$-dimensional Euclidean subspace containing it. All Euclidean lines are isometrically isomorphic by Witt's extension theorem again. Hence, we conclude the claim.

We are now ready to find the formula for $|\text{Euc}_{k},\text{Euc}_{n}|_{q}$ and we proceed by induction on $k$ and $n$. If $k=1$, \cref{dotb} holds. Suppose that \cref{dotb} holds for $1 \leq \ell_{1} \leq  k $ and $1 \leq \ell_{2} \leq n$. Then, using \cref{imp} and the induction hypothesis, we have that
\begin{align*}
|\textup{Euc}_{k+1},\textup{Euc}_{n}|_{q}&= \frac{|\text{Euc}_{1},\text{Euc}_{n}|_{q}|\text{Euc}_{k},\text{Euc}_{n-1}|_{q}}{|\text{Euc}_{1},\text{Euc}_{k+1}|_{q}}\\
&=\frac{|\textup{Euc}_{1},\textup{Euc}_{n}|_{q}|\textup{Euc}_{1},\textup{Euc}_{n-1}|_{q}\cdots|\textup{Euc}_{1},\textup{Euc}_{n-k}|_{q}}{|\textup{Euc}_{1},\textup{Euc}_{k+1}|_{q}\cdots|\textup{Euc}_{1},\textup{Euc}_{1}|_{q}}
\end{align*} 
and
\begin{align*}
|\textup{Euc}_{k},\textup{Euc}_{n+1}|_{q}&= \frac{|\text{Euc}_{1},\text{Euc}_{n+1}|_{q}|\text{Euc}_{k-1},\text{Euc}_{n}|_{q}}{|\text{Euc}_{1},\text{Euc}_{k}|_{q}}\\
&=\frac{|\textup{Euc}_{1},\textup{Euc}_{n+1}|_{q}|\textup{Euc}_{1},\textup{Euc}_{n}|_{q}\cdots|\textup{Euc}_{1},\textup{Euc}_{n-k}|_{q}}{|\textup{Euc}_{1},\textup{Euc}_{k}|_{q}\cdots|\textup{Euc}_{1},\textup{Euc}_{1}|_{q}}.
\end{align*}
This completes the proof.
\end{proof}

Furthermore, it is natural to define the following.
\begin{definition}\label{dot-binom} For any $1 \leq k\leq n$,
\renewcommand\labelitemi{\tiny$\bullet$}
\begin{itemize}
\item $[k]_{q}^{\perp}:=|\text{Euc}_{1},\text{Euc}_{k}|_{q}$,
\item $[n]_{q}^{\perp}!:=[n]_{q}^{\perp}[n-1]_{q}^{\perp}\cdots[1]_{q}^{\perp}$,
\item $\binom{n}{k}_{q}^{\perp}:=|\text{Euc}_{k},\text{Euc}_{n}|_{q}=\frac{[n]_{q}^{\perp}!}{[k]_{q}^{\perp}![n-k]_{q}^{\perp}!}$.
\end{itemize}
We call these \textbf{Euclidean-analogues}. In particular, we call $\binom{n}{k}_{q}^{\perp}$ the \textbf{Euclidean binomial coefficient}. 
We adopt the convention $[0]_{q}^{\perp}!=1$, and this implies that $|\text{Euc}_{0},\text{Euc}_{n}|_{q}=1$. 
\end{definition}

\begin{remark}
The Euclidean binomial coefficients combine the counts for the number of hyperbolic, elliptic, and parabolic subspaces of a non-degenerate space in the following way.

Suppose that $q \equiv 1$ (mod $4$), $n$ and $k$ are odd. 
As we have seen in \cref{corresp}, the parabolic form $\frac{n-1}{2}\mathbb{H} \oplus x^{2}$ is equivalent to Euc$_{n}$.
Thus, the number of $k$-dimensional parabolic subspaces of a $n$-dimensional parabolic space is $\binom{n}{k}_{q}^{\perp}$. 
Similarly, the number of $(k-1)$-dimensional hyperbolic subspaces of a $n$-dimensional parabolic space is $\binom{n}{k-1}_{q}^{\perp}$.
The number of $(k-1)$-dimensional elliptic subspaces of a $n$-dimensional parabolic space is $\binom{n}{\overline{k-1}}_{q}^{\perp}$, the number of $(k-1)$-dimensional Lorentzian subspaces of $(\mathbb{F}_{q}^{n},\textup{Euc}_{n})$, which is formulated in \cref{other limits}.

\end{remark}

\begin{lemma}\label{inc}
For any $k \geq 1$, we have $\left[ k \right ]_{q}^{\perp} \leq \left[k+1 \right ]_{q}^{\perp}$.
\end{lemma}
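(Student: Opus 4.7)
The plan is to exhibit an explicit injection from $\text{dot}_{1,k}$ into $\text{dot}_{1,k+1}$, so that no knowledge of the closed-form expression for $[k]_d$ is needed.

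First I would write down the standard coordinate inclusion $\iota : \mathbb{F}_q^k \hookrightarrow \mathbb{F}_q^{k+1}$ sending $(x_1,\ldots,x_k)$ to $(x_1,\ldots,x_k,0)$ and observe that
\[
\text{dot}_{k+1}(x_1,\ldots,x_k,0) = x_1^2 + \cdots + x_k^2 = \text{dot}_k(x_1,\ldots,x_k),
\]
so $\iota$ is an isometric embedding of $(\mathbb{F}_q^k,\text{dot}_k)$ into $(\mathbb{F}_q^{k+1},\text{dot}_{k+1})$. In particular, its image $W_0 := \iota(\mathbb{F}_q^k)$ is a $\text{dot}_k$-subspace of $\text{dot}_{k+1}$.

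Next I would check that $\iota$ carries spacelike lines to spacelike lines. If $L = \langle v \rangle$ is a $\text{dot}_1$-subspace of $\text{dot}_k$, then the restriction of $\text{dot}_{k+1}$ to $\iota(L) = \langle \iota(v)\rangle$ agrees (via the isometry $\iota$) with the restriction of $\text{dot}_k$ to $L$, which is by hypothesis isometric to $\text{dot}_1$. Hence $\iota(L) \in \text{dot}_{1,k+1}$, giving a well-defined map $\Phi: \text{dot}_{1,k} \longrightarrow \text{dot}_{1,k+1}$, $L \mapsto \iota(L)$.

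Finally, $\Phi$ is injective because $\iota$ is injective on $\mathbb{F}_q^k$: if $\iota(L) = \iota(L')$, then applying $\iota^{-1}$ on $W_0$ recovers $L = L'$. This yields
\[
[k]_d = |\text{dot}_{1,k}| \;\leq\; |\text{dot}_{1,k+1}| = [k+1]_d,
\]
as required. There is no real obstacle here; the only thing to pin down is that the inclusion $\iota$ is genuinely an isometric embedding, which is immediate from the definition of $\text{dot}_n$. (Note that one could alternatively derive the inequality from the explicit formulas for $[k]_d$ used later in the paper, but the injection argument avoids any case analysis in $q \bmod 4$ and the parity of $k$.)
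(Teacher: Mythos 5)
Your proof is correct and follows essentially the same route as the paper: the paper's argument is exactly this observation that the coordinate inclusion $(x_1,\ldots,x_k)\mapsto(x_1,\ldots,x_k,0)$ isometrically embeds $\text{dot}_k$ into $\text{dot}_{k+1}$ and hence injects spacelike lines into spacelike lines, giving $[k]_d\leq[k+1]_d$. You merely spell out the details (isometry, preservation of spacelike lines, injectivity) that the paper states in one line.
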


\begin{proof}
Note that any Euclidean line in a $k$-dimensional Euclidean space can be isometrically embedded in a $(k+1)$-dimensional Euclidean space by inclusion. Thus, the number of Euclidean lines in a $k$-dimensional Euclidean space is less than equal to the number of Euclidean lines in a $(k+1)$-dimensional Euclidean space. 
\end{proof}

\begin{proposition}\label{ru}
The Euclidean poset $E_{n}(q)$ is rank-unimodal. i.e. there is $j$ such that
\[\binom{n}{0}_{q}^{\perp} \leq \binom{n}{1}_{q}^{\perp} \leq \cdots \leq \binom{n}{j}_{q}^{\perp} \geq \cdots \geq \binom{n}{n}_{q}^{\perp}. \]
\end{proposition}

\begin{proof}
Let us consider the following:
\[M:=\frac{\binom{n}{k}_{q}^{\perp}}{\binom{n}{k-1}_{q}^{\perp}}=\frac{|\text{Euc}_{k},\text{Euc}_{n}|_{q}}{|\text{Euc}_{k-1},\text{Euc}_{n}|_{q}}=\frac{|\text{Euc}_{1},\text{Euc}_{n-k+1}|_{q}}{|\text{Euc}_{1},\text{Euc}_{k}|_{q}}.\]
Note that $M$ is greater than $1$ if $n-k+1 \geq k$ by \cref{inc}. 
This is equivalent to $\frac{n+1}{2}\geq k$. 
We also note that $M$ is less than $1$ if $n-k+1 \leq k$ by \cref{inc} again. Equivalently, $\frac{n+1}{2}\leq k$. If $n$ is odd, then we choose $j=(n+1)/2$. 
If $n$ is even, then we set $j=n/2$. It follows that $E_{n}(q)$ is rank-unimodal.
\end{proof}

There is another way to show that $E_{n}(q)$ is rank-unimodal. A sequence $a_{0},a_{1},\ldots,a_{n}$ of real numbers is called \textbf{log-concave} if $a_{k}^{2}\geq a_{k-1}a_{k+1}$ for $1 \leq k \leq n-1$, and \textbf{unimodal} if there is $j$ such that $a_{0} \leq a_{1} \leq \cdots \leq a_{j} \geq a_{j+1} \geq \cdots \geq a_{n}$. We prove that the sequence of the Euclidean binomial coefficients is log-concave.

\begin{proposition}\label{lc}
The sequence 
\[\binom{n}{0}_{q}^{\perp},\binom{n}{1}_{q}^{\perp},\ldots,\binom{n}{n}_{q}^{\perp}\]
is log-concave.
\end{proposition}

\begin{proof}
What we need to show is
\[\frac{\binom{n}{k}_{q}^{\perp}}{\binom{n}{k-1}_{q}^{\perp}} \geq \frac{\binom{n}{k+1}_{q}^{\perp}}{\binom{n}{k}_{q}^{\perp}}.\]
This inequality reduces to
\[\frac{\left [ n-k+1\right ]_{q}^{\perp}}{\left [k \right ]_{q}^{\perp}} \geq \frac{\left [ n-k \right ]_{q}^{\perp}}{\left [k+1 \right ]_{q}^{\perp}}.\]
\cref{inc} completes the proof.
\end{proof}
In general, there is a systematic method for checking log-concavity, called Newton's Theorem. This can be found in Stanley's book \cite{St1}. We introduce a useful proposition to check unimodality without proof. A sequence $a_{0},a_{1},\ldots,a_{n}$ has \textbf{no internal zeros} if $a_{i}\ne 0$ and $a_{k}\ne 0$ for $i<j<k$, then $a_{j} \ne 0$. 
\begin{proposition}\cite{St1}\label{uni}
Let $\alpha=(a_{0},a_{1},\ldots,a_{n})$ be a sequence of non-negative real numbers with no internal zeros. If $\alpha$ is log-concave, then $\alpha$ is unimodal.
\end{proposition}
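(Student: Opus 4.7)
My plan is to prove the contrapositive-style statement that once the sequence stops increasing it cannot increase again, which together with the existence of a maximum of a finite sequence gives unimodality. Concretely, I will show: if $a_{k} < a_{k-1}$ for some $k \geq 1$, then $a_{k+1} \leq a_{k}$. Iterating this from the first strict decrease onward establishes unimodality.

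The key tool is the log-concavity inequality $a_{k}^{2} \geq a_{k-1} a_{k+1}$. Fix some $k$ with $a_{k} < a_{k-1}$; note this forces $a_{k-1} > 0$. I will split into two cases depending on whether $a_{k}$ is positive or zero. If $a_{k} > 0$, then log-concavity rearranges to
\[
a_{k+1} \;\leq\; \frac{a_{k}^{2}}{a_{k-1}} \;=\; a_{k}\cdot \frac{a_{k}}{a_{k-1}} \;<\; a_{k},
\]
because the ratio $a_{k}/a_{k-1}$ is strictly less than $1$ by assumption. This is the main computation and it is straightforward.

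If instead $a_{k} = 0$, then the log-concavity inequality alone does not force $a_{k+1} = 0$, and this is where the no-internal-zeros hypothesis enters. Since $a_{k-1} > 0$ and $a_{k} = 0$, the assumption that the sequence has no internal zeros implies that every subsequent term $a_{j}$ with $j > k$ must also vanish (otherwise $k$ itself would be an internal zero between $a_{k-1}$ and some later nonzero $a_{j}$). In particular $a_{k+1} = 0 = a_{k}$, so the weak inequality $a_{k+1} \leq a_{k}$ holds.

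Combining the two cases, once the sequence strictly decreases at step $k$, every subsequent step is nonincreasing. Choosing $j$ to be the largest index with $a_{j} \geq a_{j-1}$ (or $j=0$ if no such index exists) then yields the desired unimodal shape $a_{0} \leq \cdots \leq a_{j} \geq \cdots \geq a_{n}$. The only subtle point — and what I would flag as the sole real obstacle — is handling the zero case: without the no-internal-zeros hypothesis, a sequence like $1,0,1$ is trivially log-concave (since $0 \geq 1\cdot 1$ fails, actually this is not log-concave; but $1,0,0,1$ gives $0^2 \geq 0\cdot 1$ which holds) yet not unimodal, showing the hypothesis is genuinely needed exactly at this step.
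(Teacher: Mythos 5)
Your argument is correct, and it is worth noting that the paper itself does not prove this statement at all: \cref{uni} is quoted from Stanley \cite{St1} explicitly ``without proof,'' so your write-up supplies a self-contained elementary proof rather than paralleling anything in the text. The core of your proof is sound: from a strict descent $a_{k}<a_{k-1}$, log-concavity gives $a_{k+1}\leq a_{k}^{2}/a_{k-1}<a_{k}$ when $a_{k}>0$ (so the descent propagates strictly and the induction continues), while if $a_{k}=0$ the no-internal-zeros hypothesis forces the entire tail to vanish, and your example $1,0,0,1$ correctly shows that hypothesis cannot be dropped. One small slip to fix in the last step: defining the peak as ``the largest index $j$ with $a_{j}\geq a_{j-1}$'' can fail when the sequence ends in a run of zeros (e.g.\ $2,1,0,0$ is log-concave with no internal zeros, but the largest such index is at the end of the zero tail, not at the peak); you should instead take $j$ to be one less than the first index of strict descent (or $j=n$ if there is none), which is exactly what your iteration already establishes. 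With that cosmetic correction the proof is complete.
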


Since $(\binom{n}{0}_{q}^{\perp},\binom{n}{1}_{q}^{\perp},\ldots,\binom{n}{n}_{q}^{\perp})$ is a sequence of non-negative real numbers with no internal zeros satisfying the log-concavity, $(\binom{n}{0}_{q}^{\perp},\binom{n}{1}_{q}^{\perp},\ldots,\binom{n}{n}_{q}^{\perp})$ is unimodal by \cref{uni}.

\subsection{The formula of the number of lines of Euclidean and Lorentzian types}
To count the number of Euclidean, Lorentzian, and isotropic lines of $\F$ and $(\mathbb{F}_{q}^{n},\text{Lor}_{n})$, respectively, we introduce the following Minkowski's work that counts the size of the spheres in $\mathbb{F}_{q}^{n}$, stated using the classification \cref{classi2}. 

\begin{theorem}\cite{BHIR, Ca, Mi}\label{Mi}
Let $(V,Q)$ be a non-degenerate quadratic space and $S^{Q}_{r}$ denote the sphere of radius $r$ in $(V,Q)$. $i.e.$ $S^{Q}_{r}=\left \{  x \in V ~|~Q(x)=r\right \}$. 
\begin{enumerate}
    \item[\textup{(1)}] If $Q\cong k\mathbb{H}$ and $n=2k$, then
\[|S^{Q}_{r}|=\begin{cases}
q^{n-1}-q^{\frac{n-2}{2}} & \text{ if } r\ne 0\\ 
q^{n-1}+q^{\frac{n}{2}}-q^{\frac{n-2}{2}} & \text{ if } r=0 
\end{cases}.\]
\item[\textup{(2)}] If $Q\cong (k-1)\mathbb{H}\oplus (x^{2}-\lambda y^{2})$ for some non-square $\lambda$ and $n=2k$, then
\[|S^{Q}_{r}|=\begin{cases}
q^{n-1}+q^{\frac{n-2}{2}} & \text{ if } r\ne 0 \\ 
q^{n-1}-q^{\frac{n}{2}}+q^{\frac{n-2}{2}} & \text{ if } r=0 
\end{cases}.\]
\item[\textup{(3)}] If $Q \cong k\mathbb{H} \oplus cx^{2}$ and $n=2k+1$, then
\[|S^{Q}_{r}|=\begin{cases}
q^{n-1}+q^{\frac{n- 1}{2}}\textup{sgn}(\frac{r}{c}) & \text{ if } r\ne 0 \\ 
q^{n-1} & \text{ if } r=0
\end{cases},\]
\end{enumerate}
where \textup{sgn} stands for the function on $\mathbb{F}_{q}$ given by
\[\textup{sgn}(x)=\begin{cases}
0 & \text{ if } x=0 \\ 
1 & \text{ if } x\ne 0 \text{ is a square} \\ 
-1 & \text{ otherwise}
\end{cases},\]
and $c$ is either $1$ or a non-square. Note that the cases of $r=0$ contain $0$ in their counts.
\end{theorem}
In \cite{Ca}, Casselman gives the proof of \cref{Mi} by following Minkowski's essay, 
which uses the finite Fourier transform. 
We next introduce some notations: 
\begin{enumerate}\label{notations}
    \item $\epsilon$ is $1$ if $q \equiv 1$ (mod $4$) or $n \equiv 0,1$ (mod $4$), and $-1$ otherwise, 
    \item $\delta$ is $1$ if $n$ is even, and $0$ otherwise, \item $\eta$ is $1$ if $n$ is even, and $-1$ otherwise. 
\end{enumerate}

We count the number of lines of each type in $(\mathbb{F}_{q}^{n},\text{Euc}_{n})$ and $(\mathbb{F}_{q}^{n},\text{Lor}_{n})$, respectively.

\begin{theorem}\label{dotline}
In $(\mathbb{F}_{q}^{n},\textup{Euc}_{n})$, the number of Euclidean lines is $(q^{n-1}-\eta\epsilon q^{n-1-\lfloor n/2\rfloor})/2$,
and the number of Lorentzian lines is $(q^{n-1}- \epsilon q^{n-1-\lfloor n/2\rfloor})/2$. In other words, we have

\begin{table}[H]
\renewcommand{\arraystretch}{2}
\begin{center}
\scalebox{0.87}{
\begin{tabular}{c||c|c}
\textbf{\textup{Euclidean lines}} & $\mathbf{q \equiv 1}$ \textbf{\textup{(mod $\mathbf{4}$)}} & $\mathbf{q \equiv 3}$ \textbf{\textup{(mod $\mathbf{4}$)}} \\ \hline \hline 
    $n \equiv 1 \pmod{4}$      & \multirow{2}{*}{$\frac{q^{n-1}+q^{\frac{n-1}{2}}}{2}$}               &                               $\frac{q^{n-1}+q^{\frac{n-1}{2}}}{2}$ \\ \cline{1-1} \cline{3-3} 
    $n \equiv 3 \pmod{4}$     &                               &                                $\frac{q^{n-1}-q^{\frac{n-1}{2}}}{2}$\\   \hline
    $n \equiv 0 \pmod{4}$      & \multirow{2}{*}{$\frac{q^{n-1}-q^{\frac{n-2}{2}}}{2}$}               &                              $\frac{q^{n-1}-q^{\frac{n-2}{2}}}{2}$   \\  \cline{1-1} \cline{3-3} 
    $n \equiv 2 \pmod{4}$      &                                 &                          $\frac{q^{n-1}+q^{\frac{n-2}{2}}}{2}$\\ 
\end{tabular}}
\caption{The number of Euclidean lines in $\F$}
\end{center}
\end{table}

\begin{table}[H]
\begin{center}
\renewcommand{\arraystretch}{2}
\scalebox{0.87}{
\begin{tabular}{c||c|c}
\textbf{\textup{Lorentzian lines}} & $\mathbf{q \equiv 1}$ \textbf{\textup{(mod $\mathbf{4}$)}} & $\mathbf{q \equiv 3}$ \textbf{\textup{(mod $\mathbf{4}$)}} \\ \hline \hline 
    $n \equiv 1 \pmod{4}$      & \multirow{2}{*}{$\frac{q^{n-1}-q^{\frac{n-1}{2}}}{2}$}               &                              $\frac{q^{n-1}-q^{\frac{n-1}{2}}}{2}$  \\ \cline{1-1} \cline{3-3} 
    $n \equiv 3 \pmod{4}$     &                               &                          $\frac{q^{n-1}+q^{\frac{n-1}{2}}}{2}$\\    \hline
    $n \equiv 0 \pmod{4}$      & \multirow{2}{*}{$\frac{q^{n-1}-q^{\frac{n-2}{2}}}{2}$}               &     
    
    $\frac{q^{n-1}-q^{\frac{n-2}{2}}}{2}$                  \\ \cline{1-1} \cline{3-3} 
    $n \equiv 2 \pmod{4}$      &                                 & $\frac{q^{n-1}+q^{\frac{n-2}{2}}}{2}$\\ 
\end{tabular}}
\caption{The number of Lorentzian lines in $\F$}
\end{center}
\end{table}

The number of isotropic lines in $\F$ is given in the following table.
\begin{table}[H]
\begin{center}
\renewcommand{\arraystretch}{1.5}
\scalebox{0.87}{
\begin{tabular}{c||c|c}
\textbf{\textup{Isotropic lines}} &  $\mathbf{q \equiv 1}$ \textbf{\textup{(mod $\mathbf{4}$)}}  & $\mathbf{q \equiv 3}$ \textbf{\textup{(mod $\mathbf{4}$)}}  \\ \hline \hline
$n \equiv 1 \pmod{2}$ & \multicolumn{2}{c}{$q^{n-2}+q^{n-3}+\cdots+1$}    \\ \hline
$n \equiv 0 \pmod{4}$ & \multirow{2}{*}{$(q^{n-2}+q^{n-3}+\cdots+1)+q^{\frac{n-2}{2}}$} & $(q^{n-2}+q^{n-3}+\cdots+1)+q^{\frac{n-2}{2}}$ \\ \cline{1-1} \cline{3-3}
$n \equiv 2 \pmod{4}$ &  & $(q^{n-2}+q^{n-3}+\cdots+1)-q^{\frac{n-2}{2}}$ \\ 
\end{tabular}}
\caption{The number of isotropic lines in $\F$.}
\end{center}
\end{table}

\end{theorem}

\begin{proof}[The first proof of \textup{\cref{dotline}}]
Let us count Euclidean lines first. Notice that the number of intersection points between a given Euclidean line and a sphere is always two if the radius is a square. Thus, the number of Euclidean lines in $\F$ is 
\[(\text{the size of the $(n-1)$-dimensional unit-sphere})/2\]
since we can scale the radius to $1$ if the radius is a square. Hence, our goal follows from \cref{Mi} once one matches the conditions on $\mathbb{F}_{q}^{n}$ with the correct quadratic type.

We consider the odd dimensional case where $n=2k+1$. From the relation
\[\mathbb{H}\oplus\mathbb{H}\oplus\cdots\oplus\mathbb{H}\oplus
c\text{Euc}_{1}=\text{Euc}_{2k+1},\]
it follows that $(-1)^{k}c=1$ by comparing their discriminants since $d(\mathbb{H})=-1$. Thus, we have $c=(-1)^{k}$. This implies that $\text{sgn}(r/c)=\text{sgn}(r(-1)^{k})$ by the property of sgn$(1/r)=$sgn$(r)$. Thus, sgn depends on whether $k$ is odd or even. If $q\equiv 1$ (mod $4$), then sgn$(-1)^{k}=1$ for any $k$ since $-1$ is a square. Since we are counting Euclidean lines, sgn$(r)=1$. Therefore, \cref{Mi} implies that the number of Euclidean lines is
\[\frac{q^{n-1}+q^{\frac{n- 1}{2}}\text{sgn}(r/c)}{2}=\frac{q^{n-1}+q^{\frac{n- 1}{2}}}{2}.\] 
If $q\equiv 3$ (mod $4$) and $n=4\ell+1$, let $k=2\ell$. By \cref{Mi}, the number of Euclidean lines is
\[\frac{q^{n-1}+q^{\frac{n- 1}{2}}\text{sgn}(r/c)}{2}=\frac{q^{n-1}+q^{\frac{n- 1}{2}}}{2}\] 
since sgn$(-1)^{k}=1$.
If $q\equiv 3$ (mod $4$) and $n=4\ell+3$, let $k=2\ell+1$. \cref{Mi} gives the number of Euclidean lines
\[\frac{q^{n-1}+q^{\frac{n- 1}{2}}\text{sgn}(r/c)}{2}=\frac{q^{n-1}-q^{\frac{n- 1}{2}}}{2}\] 
since sgn$(-1)^{k}=-1$.

Next, we consider the even dimensional case where $n=2k$. 
If $q \equiv 1$ (\textup{mod} $4$), or $q \equiv 3$ (\textup{mod} $4$) and $n=4\ell$, then $\textup{Euc}_{2k}$ is equivalent to the hyperbolic space $k\mathbb{H}$. 
Thus, the number of Euclidean lines is $(q^{n-1}-q^{\frac{n-2}{2}})/2$ by \cref{Mi}. 
If $q \equiv 3$ (\textup{mod} $4$) and $n=4\ell+2$, then $\textup{Euc}_{2k}$ is equivalent to $(k-1)\mathbb{H}\oplus (x^{2}-\lambda y^{2})$ for some non-square $\lambda$. 
Thus, the number of Euclidean lines is $(q^{n-1}+q^{\frac{n-2}{2}})/2$ by \cref{Mi}.

For Lorentzian lines, it is enough to replace sgn$(r)=1$ with sgn$(r)=-1$. 
For isotropic lines, since the total number of lines in $\F$ is $(q^{n}-1)/(q-1)$, one can derive their count by subtracting the number of Euclidean and Lorentzian lines from the total number of lines. 
\end{proof}

\begin{theorem}\label{lambdadot}
In $(\mathbb{F}_{q}^{n},\textup{Lor}_{n})$, the number of Euclidean lines is $(q^{n-1}+\eta\epsilon q^{n-1-\lfloor n/2\rfloor})/2$,
and the number of Lorentzian lines is $(q^{n-1}+ \epsilon q^{n-1-\lfloor n/2\rfloor})/2$. Thus, we have

\begin{table}[H]
\begin{center}
\renewcommand{\arraystretch}{2}
\scalebox{0.87}{
\begin{tabular}{c||c|c}
\textbf{\textup{Euclidean lines}} & $\mathbf{q \equiv 1}$ \textbf{\textup{(mod $\mathbf{4}$)}} & $\mathbf{q \equiv 3}$ \textbf{\textup{(mod $\mathbf{4}$)}} \\ 
\hline \hline 
    $n \equiv 1 \pmod{4}$      & \multirow{2}{*}{$\frac{q^{n-1}-q^{\frac{n-1}{2}}}{2}$}               &                              $\frac{q^{n-1}-q^{\frac{n-1}{2}}}{2}$  \\ \cline{1-1} \cline{3-3} 
    $n \equiv 3 \pmod{4}$     &                               &                                 $\frac{q^{n-1}+q^{\frac{n-1}{2}}}{2}$ \\    \hline
    $n \equiv 0 \pmod{4}$      & \multirow{2}{*}{$\frac{q^{n-1}+q^{\frac{n-2}{2}}}{2}$}               &                              $\frac{q^{n-1}+q^{\frac{n-2}{2}}}{2}$    \\ \cline{1-1} \cline{3-3} 
    $n \equiv 2 \pmod{4}$      &                                 &                                 $\frac{q^{n-1}-q^{\frac{n-2}{2}}}{2}$\\ 
\end{tabular}}
\caption{The number of Euclidean lines in $(\mathbb{F}_{q}^{n},\text{Lor}_{n})$}
\end{center}
\end{table}

\begin{table}[H]
\begin{center}
\renewcommand{\arraystretch}{2}
\scalebox{0.87}{
\begin{tabular}{c||c|c}
\textbf{\textup{Lorentzian lines}} & $\mathbf{q \equiv 1}$ \textbf{\textup{(mod $\mathbf{4}$)}} & $\mathbf{q \equiv 3}$ \textbf{\textup{(mod $\mathbf{4}$)}} \\ \hline \hline 
    $n \equiv 1 \pmod{4}$      & \multirow{2}{*}{$\frac{q^{n-1}+q^{\frac{n-1}{2}}}{2}$}               &                              $\frac{q^{n-1}+q^{\frac{n-1}{2}}}{2}$   \\ \cline{1-1} \cline{3-3} 
    $n \equiv 3 \pmod{4}$     &                               &                                 $\frac{q^{n-1}-q^{\frac{n-1}{2}}}{2}$\\    \hline
    $n \equiv 0 \pmod{4}$      & \multirow{2}{*}{$\frac{q^{n-1}+q^{\frac{n-2}{2}}}{2}$}               &                              $\frac{q^{n-1}+q^{\frac{n-2}{2}}}{2}$    \\ \cline{1-1} \cline{3-3} 
    $n \equiv 2 \pmod{4}$      &                                 &                                 $\frac{q^{n-1}-q^{\frac{n-2}{2}}}{2}$\\ 
\end{tabular}}
\caption{The number of Lorentzian lines in $(\mathbb{F}_{q}^{n},\text{Lor}_{n})$}
\end{center}
\end{table}

The number of isotropic lines in $(\mathbb{F}_{q}^{n},\textup{Lor}_{n})$ is given in the following table.
\begin{table}[H]
\begin{center}
\renewcommand{\arraystretch}{1.5}
\scalebox{0.9}{
\begin{tabular}{c||c|c}
\textbf{\textup{Isotropic lines}} &  $\mathbf{q \equiv 1}$ \textbf{\textup{(mod $\mathbf{4}$)}} & $\mathbf{q \equiv 3}$ \textbf{\textup{(mod $\mathbf{4}$)}}  \\ \hline \hline
$n \equiv 1 \pmod{2}$ & \multicolumn{2}{c}{$q^{n-2}+q^{n-3}+\cdots+1$}    \\ \hline
$n \equiv 0 \pmod{4}$ & \multirow{2}{*}{$(q^{n-2}+q^{n-3}+\cdots+1)-q^{\frac{n-2}{2}}$} & $(q^{n-2}+q^{n-3}+\cdots+1)-q^{\frac{n-2}{2}}$ \\ \cline{1-1} \cline{3-3}
$n \equiv 2 \pmod{4}$ &  & $(q^{n-2}+q^{n-3}+\cdots+1)+q^{\frac{n-2}{2}}$ \\ 
\end{tabular}}
\caption{The number of isotropic lines in $(\mathbb{F}_{q}^{n},\text{Lor}_{n})$}
\end{center}
\end{table}

\end{theorem}

\begin{proof}
We follow the same strategy as in \cref{dotline}. Let us first consider the odd dimensional case where $n=2k+1$ and we count Euclidean line of $(\mathbb{F}_{q}^{n},\text{Lor}_{2k+1})$. From the relation
\[\mathbb{H}\oplus\mathbb{H}\oplus\cdots\oplus\mathbb{H}\oplus c\text{Euc}_{1}=\text{Lor}_{2k+1},\]
by comparing discriminants, we obtain $(-1)^{k}c=\lambda$ for some non-square $\lambda$. i.e. $c=(-1)^{k}\lambda$. Thus, we have
\[\text{sgn}(c)=
\begin{cases}
-1 & \text{ if } k \text{ is even or }q\equiv 1\text{ (mod $4$)} \\ 
1 & \text{ if } k \text{ is odd and }q \equiv 3 \text{ (mod $4$)}
\end{cases}
.\]
Since sgn$(r/c)=$sgn$(rc)$ and sgn$(r)=1$,  
\[\text{sgn}(r/c)=
\begin{cases}
-1 & \text{ if } k \text{ is even or }q\equiv 1\text{ (mod $4$)} \\ 
1 & \text{ if } k \text{ is odd and }q \equiv 3 \text{ (mod $4$)}
\end{cases}
.\]
This observation and similar counts as the proof of \cref{dotline} complete the proof. 
\end{proof}
We give another proof of \cref{dotline}. 
One can prove \cref{lambdadot} using a similar way. 
To do this, we need one more lemma, which counts how many pairs $(n,n+1)$ in $\mathbb{F}_{q}$ are $(S,S),(S,N),(N,S)$, and $(N,N)$, where $S$ denotes a square and $N$ denotes a non-square in $\mathbb{F}_{q}$. For example, in $\mathbb{F}_{5}$, there is no $(S,S)$ pair, one $(S,N)$ pair $(1,2)$, one $(N,S)$ pair $(3,4)$ and one $(N,N)$ pair $(2,3)$. 
In \cite{Da}, a proof of the following lemma is given in the case that $q$ is a prime.
Here we consider a general prime power $q$, and our proof is similar.
We denote by $|SS|,|SN|,|NS|$, and $|NN|$ the number of $(S,S),(S,N),(N,S)$, and $(N,N)$ pairs in $\mathbb{F}_{q}$, respectively, and we do not consider the pairs $(0,1)$ and $(-1,0)$.
\begin{lemma}\cite{Da}\label{ss} Assume that $n \in \mathbb{F}_{q} \setminus \{0,-1\}$. The number of pairs $(n,n+1)$ among squares and non-squares for each case is given in the following table. 
\begin{table}[H]
\begin{center}
\renewcommand{\arraystretch}{1.1}
\scalebox{0.9}{
\begin{tabular}{c||c|c}
 & $\mathbf{q \equiv 1}$ \textbf{(\textup{mod} $\mathbf{4}$)} & $\mathbf{q\equiv 3}$ \textbf{(\textup{mod} $\mathbf{4}$)} \\ \hline \hline 
$|SS|$ & $(q-5)/4$ & $(q-3)/4$ \\ \hline
$|SN|$ & $(q-1)/4$ & $(q+1)/4$ \\ \hline
$|NS|$ & $(q-1)/4$ & $(q-3)/4$\\ \hline
$|NN|$ & $(q-1)/4$ & $(q-3)/4$ \\ 
\end{tabular}}
\caption{The number of pairs for each case in $\mathbb{F}_{q}$}
\end{center}
\end{table}
\end{lemma}
\begin{proof}
We first notice the following four equations:
\begin{align*}
|SS|+|SN|&=\begin{cases}
(q-3)/2 & \text{ if } q \equiv 1\text{ (mod $4$)}\\ 
(q-1)/2 & \text{ if } q \equiv 3\text{ (mod $4$)}
\end{cases},  &|SS|+|NS|=(q-3)/2,\\
|NS|+|NN|&=\begin{cases}
(q-1)/2 & \text{ if } q \equiv 1\text{ (mod $4$)}\\ 
(q-3)/2 & \text{ if } q \equiv 3\text{ (mod $4$)}
\end{cases},  &|SN|+|NN|=(q-1)/2.
\end{align*}
We find some relations among $|SS|,|SN|,|NS|$, and $|NN|$ to solve simultaneous equations. From the following equation 
\[\left (\frac{n}{q} \right)\left(\frac{n+1}{q}\right)=\begin{cases}
1 & \text{ if } (n,n+1)=SS,NN \\ 
-1 & \text{ if } (n,n+1)=SN,NS
\end{cases},\]
where $(k/q)$ is a Jacobi symbol, we obtain that
\[|SS|+|NN|-|SN|-|NS|=\sum_{n \in \mathbb{F}_{q} \setminus \left \{ -1,0 \right \}} \left ( \frac{n(n+1)}{q}\right).\]
Since any $n$ has a reciprocal in $\mathbb{F}_{q}^{*}$, denoted by $m$, it follows that $n(n+1) = n^{2}(1+m)$ in $\mathbb{F}_{q}^{*}$. Note that the Jacobi symbol is multiplicative. Thus, we simplify the terms in the summation: 
\[\left ( \frac{n(n+1)}{q}\right)=\left ( \frac{1+m}{q}\right).\]
Since $n \in \mathbb{F}_{q}\setminus \left \{ -1,0 \right \}$, we have $m \in \mathbb{F}_{q}\setminus \left \{ -1,0 \right \}$, which implies
\[\sum_{n \in \mathbb{F}_{q} \setminus \left \{ -1,0 \right \}} \left ( \frac{n(n+1)}{q}\right)=\sum_{m \in \mathbb{F}_{q} \setminus \left \{ -1,0 \right \}} \left ( \frac{1+m}{q}\right).\]
We note that the numbers of squares and non-squares are the same.
This yields the equation $\sum_{m \in \mathbb{F}_{q} } \left (m/q \right )=0.$
Putting it altogether, we have that
\[
|SS|+|NN|-|SN|-|NS|=\sum_{m \in \mathbb{F}_{q} \setminus \left \{ -1,0 \right \}} \left ( \frac{1+m}{q}\right)=-1+\sum_{m \in \mathbb{F}_{q} } \left (\frac{m}{q} \right )=-1.
\]
By some computations, $|SS|,|NN|,|SN|$, and $|NS|$ can be obtained.
\end{proof}

\begin{proof}[The second proof of \textup{\cref{dotline}}]
We prove the theorem by mathematical induction, and consider the case where $q \equiv 1$ (mod $4$) and $n$ is odd. 
The proof is similar for other cases. 

Let $n=1$. In $(\mathbb{F}_{q},\textup{Euc}_{1})$, there is only one Euclidean line, which is the same as saying $(q^{1-1}+q^{\frac{1-1}{2}})/2=1$. On the other hand, there are no Lorentzian lines, which is also the same with $(q^{1-1}-q^{\frac{1-1}{2}})/2=0$. For isotropic lines, there is one isotropic line $\left \langle 0  \right \rangle$, implying $q^{1-1}=1$. 

Suppose that this statement is valid for $k \leq n$. 
We count the number of lines of Euclidean lines in $(\mathbb{F}_{q}^{n+1},\textup{Euc}_{n+1})$. 
The proof is similar for counting Lorentzian lines. 
Let $e_{1},e_{2},\ldots,e_{n+1}$ be the standard unit vectors of $(\mathbb{F}_{q}^{n+1},\textup{Euc}_{n+1})$.
Then, $e_{n+1}+t_{1}e_{1}+t_{2}e_{2}+\cdots +t_{n}e_{n}$ can consist of any line in $\mathbb{F}_{q}^{n+1} \setminus \left \langle e_{1},e_{2},\ldots,e_{n} \right \rangle $, except for $\left< e_{n+1} \right>$, where $t_{i}$ in $\mathbb{F}_{q}$ for $i=1,2,\ldots, n$. 
In order to find a condition when a line in $\mathbb{F}_{q}^{n+1} \setminus \left \langle e_{1},e_{2},\ldots,e_{n}  \right \rangle $ is Euclidean, we observe the following:
\begin{equation}\label{dot}
( e_{n+1}+t_{1}e_{1}+\cdots +t_{n}e_{n})\cdot (e_{n+1}+t_{1}e_{1}+\cdots +t_{n}e_{n} ) =1+t_{1}^{2}+\cdots+t_{n}^{2}.
\end{equation}
In \cref{dot}, when the right-hand side is a square, then the number of solutions is the difference between the number of Euclidean lines in $(\mathbb{F}_{q}^{n+1},\textup{Euc}_{n+1})$ and the number of Euclidean lines in $(\mathbb{F}_{q}^{n},\textup{Euc}_{n})$. 
Suppose that $q \equiv 1$ (mod $4$). Let $a_{n}$ be the sequence of the number of Euclidean lines in $(\mathbb{F}_{q}^{n},\textup{Euc}_{n})$. Note that $\left |SS  \right |=(q-5)/4$ and $\left | NS \right |=(q-1)/4$ by \cref{ss}. 
If $k$ is odd, 
\begin{align*}
a_{k+1}-a_{k}&=|S_{0}^{k-1}|+|\sum_{c^{2}\ne 0}t_{1}^{2}+t_{2}^{2}+\cdots+t_{k}^{2}=c^{2}-1|\\
&=|S_{0}^{k-1}| +\Big |\sum_{\substack{c^{2}\ne 0, \\ c^{2}-1:\text{ square}}} t_{1}^{2}  +\cdots+t_{k}^{2}=c^{2}-1 \Big|  +  \Big|\sum_{\substack{c^{2}\ne 0, \\ c^{2}-1:\text{ non-square}}}t_{1}^{2}+\cdots+t_{k}^{2}=c^{2}-1 \Big|\\
&=q^{k-1}+\frac{1}{4}(q-5)(q^{k-1}+q^{\frac{k-1}{2}})+\frac{1}{4}(q-1)(q^{k-1}-q^{\frac{k-1}{2}})\\
&=\frac{q-1}{2}q^{k-1}-q^{\frac{k-1}{2}},
\end{align*}
where $|S_{0}^{k-1}|$ is the number of size of the $(k-1)$-sphere with the radius $0$. Similarly, when $k$ is even, we obtain
\[a_{k+1}-a_{k}=\frac{q-1}{2}q^{k-1}-\frac{q-1}{2}q^{\frac{k-2}{2}}+q^{\frac{k}{2}}.\]
We are now ready to find $a_{n+1}$ when $n$ is odd:
\begin{align*}
a_{n+1}-a_{1}&=\sum_{k=1}^{n}(a_{k+1}-a_{k})\\
&=\sum_{\substack{k=1, \\ k:\text{ odd}}}^{n}(a_{k+1}-a_{k})+\sum_{\substack{k=1, \\ k:\text{ even}}}^{n}(a_{k+1}-a_{k})\\
&=\sum_{\substack{k=1, \\ k:\text{ odd}}}^{n}\left(\frac{q-1}{2}q^{k-1}-q^{\frac{k-1}{2}}\right)+\sum_{\substack{k=1, \\ k:\text{ even}}}^{n}\left(\frac{q-1}{2}q^{k-1}-\frac{q-1}{2}q^{\frac{k-2}{2}}+q^{\frac{k}{2}}\right)\\
&=\frac{q-1}{2}\frac{(q^{2})^{\frac{n+1}{2}}-1}{q^{2}-1}-\frac{q^{\frac{n+1}{2}}-1}{q-1}+q\frac{q-1}{2}\frac{(q^{2})^{\frac{n-1}{2}}-1}{q^{2}-1}-\frac{q-1}{2}\frac{q^{\frac{n-1}{2}}-1}{q-1}+q\frac{q^{\frac{n-1}{2}}-1}{q-1}\\
&=\frac{q^{n}-q^{\frac{n-1}{2}}}{2}-1.
\end{align*}
Since $a_{1}=1$, we conclude that $a_{n+1}=(q^{n}-q^{\frac{n-1}{2}})/2$. This completes the proof.
\end{proof}

\subsection{An isometric invariant of quadratic spaces over finite fields}\label{iso inv}

In this subsection, we show that the number of lines of each type is an intrinsic invariant of quadratic spaces over finite fields.

Since there are two types of non-degenerate quadratic spaces over finite fields due to \cref{classi1}, 
we have the following $2k+1$ possible $k$-dimensional quadratic subspaces 
of $(\mathbb{F}_{q}^{n},\textup{Euc}_{n})$ up to equivalence:
\[
\textup{Euc}_{k}, \quad \textup{Lor}_{k}, \quad \textup{Euc}_{k-1}\oplus 0,\quad \textup{Lor}_{k-1}\oplus 0,\quad \ldots \quad  ,\quad , \textup{Euc}_{1}\oplus 0^{k-1},\quad \textup{Lor}_{1}\oplus 0^{k-1}, \quad 0^{k}.
\]
It is not true in general that all these types of quadratic subspaces exist inside of $(\mathbb{F}_{q}^{n},\text{Euc}_{n})$. 
It is well-known which types of quadratic subspaces 
can be embedded in $(\mathbb{F}_{q}^{n},\text{Euc}_{n})$. (For a reference, see Section 2.5 in \cite{Yo4}.) 
Let us call a $k$-dimensional degenerate quadratic subspace $S$ of $(\mathbb{F}_{q}^{n},\text{Euc}_{n})$ a $\mathbf{A}$\textbf{-(sub)space} if $S$ is isometrically isomorphic to a type A of quadratic subspaces as listed above. 
For example, if a quadratic subspace $S$ of $(\mathbb{F}_{q}^{n},\text{Euc}_{n})$ is isometrically isomorphic to Euc$_{k-1}\oplus 0$, then $S$ is called a Euc$_{k-1}\oplus 0$-subspace of $(\mathbb{F}_{q}^{n},\text{Euc}_{n})$.

Recall that dimension and discriminant are useful and effective invariants of non-degenerate quadratic spaces over finite fields (\cref{coro}). 
However, they are not good enough to differentiate all types of quadratic spaces above including degenerate spaces. 
We introduce a new isometric invariant of combinatorial type on $(\mathbb{F}_{q}^{n},\textup{Euc}_{n})$. Our new invariant, the number of lines of certain types, works well on $(\mathbb{F}_{q}^{n},\textup{Euc}_{n})$ especially when we want to distinguish even degenerate quadratic spaces as well as non-degenerate quadratic spaces.

\begin{theorem}\label{invariant}
Let $W_{1}$ and $W_{2}$ be quadratic spaces over a finite field. Then, $W_{1}$ and $W_{2}$ are isometrically isomorphic if and only if $(e^{(1)},\ell^{(1)},i^{(1)})=(e^{(2)},\ell^{(2)},i^{(2)})$, where $e^{(j)}$ is the number of Euclidean lines in $W_{j}$, $\ell^{(j)}$ is the number of Lorentzian lines in $W_{j}$, and $i^{(j)}$ is the number of isotropic lines in $W_{j}$ for $j=1,2$. 
\end{theorem}

\begin{proof}
If two quadratic subspaces are isometrically isomorphic, then the number of lines of each type in them is the same since isometries do not change the line types. 
To prove the converse, let us denote the number of Euclidean lines in $\F$ by $s_{n}$, and Lorentzian lines in $\F$ by $t_{n}$. 
For a vector $v$ in a $\text{Euc}_{k}\oplus 0^{n-k}$-space, we can write $v=(v_{1},v_{2})$ with $|v|=|v_{1}|$, where $v_{1}$ is a nonzero vector in the $k$-dimensional Euclidean space and $v_{2}$ is a vector in the $0^{n-k}$-space. 
Thus, the number of Euclidean lines in a Euc$_{k} \oplus 0^{n-k}$-space is $s_{k} \times q^{n-k}$. 
Similarly, the number of Lorentzian lines in a Euc$_{k} \oplus 0^{n-k}$-space is $t_{k} \times q^{n-k}$.
For the number of isotropic lines in a Euc$_{k} \oplus 0^{n-k}$-space, we subtract the number of Euclidean and Lorentzian lines from the total number of lines in $\mathbb{F}_{q}^{n}$. 
Let us consider the case when $q \equiv 1$ (mod $4$) and $n$ is odd. 
The number of lines of each type, in this case, is given in \cref{numbers}. 
One can check that the triples in each quadratic type are all distinct. 
The remaining cases can be obtained similarly.
\end{proof}

\begin{table}[H]
\renewcommand{\arraystretch}{1.5}
\scalebox{0.9}{
\begin{tabular}{c||c|c|c}
\textbf{The type of spaces} & \textbf{Euclidean lines} & \textbf{Lorentzian lines}  & \textbf{Isotropic lines} \\ \hline \hline 
Euc$_{n}$ & $(q^{n-1}+q^{\frac{n-1}{2}})/2$ & $(q^{n-1}-q^{\frac{n-1}{2}})/2$ & $q^{n-2}+q^{n-3}+\cdots+q+1$ \\ \hline
Lor$_{n}$ & $(q^{n-1}-q^{\frac{n-1}{2}})/2$ & $(q^{n-1}+q^{\frac{n-1}{2}})/2$ & $q^{n-2}+q^{n-3}+\cdots+q+1$ \\ \hline
Euc$_{k}\oplus 0^{n-k}$, $k$ is odd & $(q^{n-1}+q^{\frac{2n-k-1}{2}})/2$ & $(q^{n-1}-q^{\frac{2n-k-1}{2}})/2$ & $q^{n-2}+q^{n-3}+\cdots+q+1$ \\ \hline
Euc$_{k}\oplus 0^{n-k}$, $k$ is even & $(q^{n-1}-q^{\frac{2n-k-2}{2}})/2$ & $(q^{n-1}-q^{\frac{2n-k-2}{2}})/2$ & $q^{n-2}+q^{n-3}+\cdots+q+1+q^{\frac{2n-k-2}{2}}$  \\ \hline
Lor$_{k}\oplus 0^{n-k}$, $k$ is odd & $(q^{n-1}-q^{\frac{2n-k-1}{2}})/2$ & $(q^{n-1}+q^{\frac{2n-k-1}{2}})/2$ & $q^{n-2}+q^{n-3}+\cdots+q+1$
\\ \hline 
Lor$_{k}\oplus 0^{n-k}$, $k$ is even & $(q^{n-1}+q^{\frac{2n-k-2}{2}})/2$ & $(q^{n-1}+q^{\frac{2n-k-2}{2}})/2$ & $q^{n-2}+q^{n-3}+\cdots+q+1-q^{\frac{2n-k-2}{2}}$ \\ \hline
$0^{n}$ & $0$ & $0$ & $q^{n-1}+q^{n-2}+\cdots+q+1$ \\ 
\end{tabular}}
\caption{The number of Euclidean, Lorentzian, and isotropic lines in each quadratic space when $q \equiv 1$ (mod $4$) and $n$ is odd}
\label{numbers}
\end{table}

\begin{example}\label{isometric invariant} 
We revisit \cref{all2}. One can check that \cref{invariant} works for $2$-dimensional quadratic subspaces of $(\mathbb{F}_{3}^{3},\textup{Euc}_{3})$. Here, $E$ denotes Euclidean line, $L$ denotes Lorentzian line, and $I$ denotes isotropic line. 

\begin{table}[H]
\begin{center}
\renewcommand{\arraystretch}{1.2}
\scalebox{0.9}{
\begin{tabular}{c||c|c|c}
 & \textbf{Lines in each plane} & \textbf{Type of each line} & \textbf{Type of planes} \\ \hline \hline
$p_{1}$ & $\left \{ \left \langle (1,0,0) \right \rangle,\left \langle (0,1,0) \right \rangle, \left \langle (1,1,0) \right \rangle, \left \langle (1,2,0) \right \rangle \right \}$  &
\multirow{3}{*}{$E,E,L,L$} & \multirow{3}{*}{$\text{Euclidean}$} \\ \cline{1-2}
$p_{2}$ & $\left \{ \left \langle (1,0,0) \right \rangle,\left \langle (0,0,1) \right \rangle, \left \langle (1,0,1) \right \rangle, \left \langle (1,0,2) \right \rangle \right \}$  &
 &  \\ \cline{1-2}
$p_{3}$ & $\left \{ \left \langle (0,1,0) \right \rangle,\left \langle (0,0,1) \right \rangle, \left \langle (0,1,1) \right \rangle, \left \langle (0,1,2) \right \rangle \right \}$  &
 &  \\ \hline
$p_{4}$ & $\left \{ \left \langle (1,0,0) \right \rangle,\left \langle (0,1,1) \right \rangle, \left \langle (1,1,1) \right \rangle, \left \langle (1,2,2) \right \rangle \right \}$  &
\multirow{6}{*}{$E,L,I,I$} & 
\multirow{6}{*}{$\text{Lorentzian}$} \\ \cline{1-2}
$p_{5}$ & $\left \{ \left \langle (1,0,0) \right \rangle,\left \langle (0,1,2) \right \rangle, \left \langle (1,1,2) \right \rangle, \left \langle (1,2,1) \right \rangle \right \}$  &
 &  \\ \cline{1-2}
$p_{6}$ & $\left \{ \left \langle (0,1,0) \right \rangle,\left \langle (1,0,1) \right \rangle, \left \langle (1,1,1) \right \rangle, \left \langle (1,2,1) \right \rangle \right \}$  &
 &  \\ \cline{1-2}
$p_{7}$ & $\left \{ \left \langle (0,1,0) \right \rangle,\left \langle (1,0,2) \right \rangle, \left \langle (1,1,2) \right \rangle, \left \langle (1,2,2) \right \rangle \right \}$  &
 &  \\ \cline{1-2}
$p_{8}$ & $\left \{ \left \langle (0,0,1) \right \rangle,\left \langle (1,1,0) \right \rangle, \left \langle (1,1,1) \right \rangle, \left \langle (1,1,2) \right \rangle \right \}$  &
 &  \\ \cline{1-2}
$p_{9}$ & $\left \{ \left \langle (0,0,1) \right \rangle,\left \langle (1,2,0) \right \rangle, \left \langle (1,2,1) \right \rangle, \left \langle (1,2,2) \right \rangle \right \}$  &
 &  \\ \hline
$p_{10}$ & $\left \{ \left \langle (1,0,1) \right \rangle,\left \langle (0,1,1) \right \rangle, \left \langle (1,1,2) \right \rangle, \left \langle (1,2,0) \right \rangle \right \}$  &
\multirow{4}{*}{$L,L,L,I$} & \multirow{4}{*}{$\text{Lor}\oplus 0$} \\ \cline{1-2}
$p_{11}$ & $\left \{ \left \langle (1,0,1) \right \rangle,\left \langle (1,1,0) \right \rangle, \left \langle (1,2,2) \right \rangle, \left \langle (0,1,2) \right \rangle \right \}$  &
 &  \\ \cline{1-2}
$p_{12}$ & $\left \{ \left \langle (1,1,0) \right \rangle,\left \langle (0,1,1) \right \rangle, \left \langle (1,2,1) \right \rangle, \left \langle (1,0,2) \right \rangle \right \}$  &
 & \\ \cline{1-2}
$p_{13}$ & $\left \{ \left \langle (1,1,1) \right \rangle,\left \langle (1,2,0) \right \rangle, \left \langle (1,0,2) \right \rangle, \left \langle (0,1,2) \right \rangle \right \}$  &
 &  \\ 
\end{tabular}}
\caption{The description of all $2$-dimensional quadratic subspaces of $(\mathbb{F}_{3}^{3},\textup{Euc}_{3})$ with their quadratic types and the types of lines}
\end{center}
\end{table}

\end{example}

\subsection{Combinatorial properties of $\binom{n}{k}_{q}^{\perp}$}

Here are the analogues of Pascal identities.
\begin{proposition}\label{pa} For any $1 \le  k < n$, we have
\begin{enumerate}

\item[\textup{(1)}] $\binom{n}{0}_{q}^{\perp}=\binom{n}{n}_{q}^{\perp}=1$,

\item[\textup{(2)}] $\binom{n}{k}_{q}^{\perp}=\binom{n}{n-k}_{q}^{\perp}$,

\item[\textup{(3)}] \[\binom{n}{k}_{q}^{\perp}=\binom{n-1}{k-1}_{q}^{\perp}+\frac{\left [ n \right ]_{q}^{\perp}-\left [ k \right ]_{q}^{\perp}}{\left [ n-k \right ]_{q}^{\perp}}\binom{n-1}{k}_{q}^{\perp}.\]
\end{enumerate} 
\end{proposition}

\begin{proof}
By \cref{rs},  $\binom{n}{k}_{q}^{\perp}=\binom{n}{n-k}_{q}^{\perp}$ holds. The third equality is given by 
\[
\binom{n}{k}_{q}^{\perp}-\binom{n-1}{k-1}_{q}^{\perp}=\frac{\left [ n \right ]_{q}^{\perp}-\left [ k \right ]_{q}^{\perp}}{\left [ k \right ]_{q}^{\perp}}\binom{n-1}{k-1}_{q}^{\perp}
=\frac{\left [ n \right ]_{q}^{\perp}-\left [ k \right ]_{q}^{\perp}}{\left [ n-k \right ]_{q}^{\perp}}\binom{n-1}{k}_{q}^{\perp}.
\]
\end{proof}

\begin{corollary}\label{pa2}
For any $1 \le k < n$, we have
\[\binom{n}{k}_{q}^{\perp}=\binom{n-1}{k}_{q}^{\perp}+\frac{\left [ n \right ]_{q}^{\perp}-\left [ n-k \right ]_{q}^{\perp}}{\left [ k \right ]_{q}^{\perp}}\binom{n-1}{k-1}_{q}^{\perp}.\]
\end{corollary}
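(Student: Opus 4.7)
The plan is to derive this directly from the previous proposition by exploiting the rank-symmetry identity $\binom{n}{k}_{d} = \binom{n}{n-k}_{d}$ (part (ii) of \cref{pa}). The idea is that the two Pascal-type identities in \cref{pa} and \cref{pa2} should be mirror images of one another under the substitution $k \leftrightarrow n-k$.

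Concretely, I would start from the third identity in \cref{pa} and apply it with $k$ replaced by $n-k$ (which is legal because the hypothesis $0<k<n$ is preserved). This gives
\[
\binom{n}{n-k}_{d} = \binom{n-1}{n-k-1}_{d} + \frac{[n]_{d}-[n-k]_{d}}{[k]_{d}}\binom{n-1}{n-k}_{d}.
\]
Now I would rewrite each dot-binomial coefficient using the symmetry $\binom{m}{j}_{d} = \binom{m}{m-j}_{d}$. On the left this converts $\binom{n}{n-k}_{d}$ to $\binom{n}{k}_{d}$; on the right, $\binom{n-1}{n-k-1}_{d}$ becomes $\binom{n-1}{k}_{d}$ (since $(n-1)-(n-k-1)=k$) and $\binom{n-1}{n-k}_{d}$ becomes $\binom{n-1}{k-1}_{d}$ (since $(n-1)-(n-k)=k-1$). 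Substituting these yields exactly the claimed identity.

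As a sanity check, one can also verify this directly from the closed form: since $\binom{n}{k}_{d} = \frac{[n]_{d}}{[n-k]_{d}}\binom{n-1}{k}_{d}$, subtracting $\binom{n-1}{k}_{d}$ from both sides gives $\binom{n}{k}_{d}-\binom{n-1}{k}_{d} = \frac{[n]_{d}-[n-k]_{d}}{[n-k]_{d}}\binom{n-1}{k}_{d}$, and then rewriting $\binom{n-1}{k}_{d} = \frac{[n-k]_{d}}{[k]_{d}}\binom{n-1}{k-1}_{d}$ produces the stated right-hand side. There is no real obstacle here; the only thing to watch is that when swapping indices via rank-symmetry, one must apply it consistently to the $\binom{n-1}{\cdot}_{d}$ terms as well as the $\binom{n}{\cdot}_{d}$ term, which is valid since the symmetry holds at every level.
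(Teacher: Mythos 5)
Your proposal is correct and matches what the paper intends: the corollary is stated without proof as an immediate consequence of \cref{pa}, and your substitution $k\mapsto n-k$ in the third identity of \cref{pa} followed by the rank-symmetry $\binom{m}{j}_{d}=\binom{m}{m-j}_{d}$ (applied at both the $n$ and $n-1$ levels) is exactly the intended derivation. Your direct algebraic check is also valid and simply mirrors the manipulation the paper uses to prove the third identity of \cref{pa}.
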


One can define the analogue of Pascal's triangle by \cref{pa} or \cref{pa2}. 
\begin{example}
Let $q \equiv 1$ (mod $4$). Then, we have an analogue of Pascal's triangle with rows $0$ through $4$
\begin{center}
\begin{tabular}{>{$n=}l<{$\hspace{12pt}}*{13}{c}}
$0$ &&&&&&&$1$&&&&&&\\
$1$ &&&&&&$1$&&$1$&&&&&\\
$2$ &&&&&1&&$\frac{q-1}{2}$&&1&&&&\\
$3$ &&&&$1$&&$\frac{q^{2}+q}{2}$&&$\frac{q^{2}+q}{2}$&&$1$&&&\\
$4$ &&&$1\quad$&&$\frac{q^{3}-q}{2}$&&$\frac{q^{2}(q+1)^{2}}{2}$&&$\frac{q^{3}-q}{2}$&&$\quad1$&&
\end{tabular}
\end{center}
and 
\[\binom{4}{2}_{q}^{\perp}=\binom{3}{1}_{q}^{\perp}+\frac{\left [ 4 \right ]_{q}^{\perp}-\left[ 2 \right ]_{q}^{\perp}}{\left [ 2 \right ]_{q}^{\perp}}\binom{3}{2}_{q}^{\perp}=\frac{q^{2}+q}{2}+\frac{q^{3}-2q+1}{q-1}\frac{q^{2}+q}{2}=\frac{q^{2}(q+1)^{2}}{2}.\]
\end{example}

Next, using \cref{dotline}, we show that the Euclidean binomial coefficients are written in terms of the $q$-binomial coefficients and some polynomials.

\begin{proposition}\label{polynomials}
The expression of the Euclidean binomial coefficient $\binom{n}{k}_{q}^{\perp}$ is given by the $q$-binomial coefficients as in \textup{\cref{Table: Euc1}} and \textup{\cref{Table: Euc2}}. 
\end{proposition}

\vspace*{-\baselineskip}

\begin{table}[H]
\begin{center}
\renewcommand{\arraystretch}{1.7}
\scalebox{0.9}{
\begin{tabular}{c||c|c}
$\mathbf{\binom{n}{k}_{q}^{\perp}}$ &  $\mathbf{k}$ \textbf{\textup{is odd}} & $\mathbf{k}$ \textbf{\textup{is even}} \\ \hline \hline
$\mathbf{n}$ \textbf{\textup{is odd}} &  $\frac{1}{2}q^{\frac{k(n-k)}{2}}(q^{\frac{n-k}{2}}+1)\binom{(n-1)/2}{(k-1)/2}_{q^{2}}$ & 
$\frac{1}{2}q^{\frac{k(n-k)}{2}}(q^{\frac{k}{2}}+1)\binom{(n-1)/2}{k/2}_{q^{2}}$ \\ \hline
$\mathbf{n}$ \textbf{\textup{is even}} &  $\frac{1}{2}q^{\frac{k(n-k)-1}{2}}(q^{\frac{n}{2}}-1)\binom{(n-2)/2}{(k-1)/2}_{q^{2}}$ & 
$\frac{1}{2}q^{\frac{k(n-k)}{2}}\frac{(q^{\frac{n-k}{2}}+1)(q^{\frac{k}{2}}+1)}{q^{\frac{n}{2}}+1}\binom{n/2}{k/2}_{q^{2}}$ \\ 
\end{tabular}}.
\caption{The expression of $\binom{n}{k}_{q}^{\perp}$ when $q \equiv 1$ (mod $4$)}\label{Table: Euc1}
\end{center}
\end{table}

\begin{table}[H]
\begin{center}
\renewcommand{\arraystretch}{1.7}
\scalebox{0.9}{
\begin{tabular}{c||c|c}
$\mathbf{\binom{n}{k}_{q}^{\perp}}$ &  $\mathbf{k \equiv 1 \pmod{4}}$ & $\mathbf{k \equiv 2 \pmod{4}}$  \\ \hline \hline
$\mathbf{n \equiv 1 \pmod{4}}$ &  $\frac{1}{2}q^{\frac{k(n-k)}{2}}(q^{\frac{n-k}{2}}+1)\binom{(n-1)/2}{(k-1)/2}_{q^{2}}$ & 
$\frac{1}{2}q^{\frac{k(n-k)}{2}}(q^{\frac{k}{2}}-1)\binom{(n-1)/2}{k/2}_{q^{2}}$  \\ \hline
$\mathbf{n \equiv 2 \pmod{4}}$ &  $\frac{1}{2}q^{\frac{k(n-k)-1}{2}}(q^{\frac{n}{2}}+1)\binom{(n-2)/2}{(k-1)/2}_{q^{2}}$ & 
$\frac{1}{2}q^{\frac{k(n-k)}{2}}\frac{(q^{\frac{n-k}{2}}+1)(q^{\frac{k}{2}}-1)}{q^{\frac{n}{2}}-1}\binom{n/2}{k/2}_{q^{2}}$  \\  \hline
$\mathbf{n \equiv 3 \pmod{4}}$ &  $\frac{1}{2}q^{\frac{k(n-k)}{2}}(q^{\frac{n-k}{2}}-1)\binom{(n-1)/2}{(k-1)/2}_{q^{2}}$ & 
$\frac{1}{2}q^{\frac{k(n-k)}{2}}(q^{\frac{k}{2}}-1)\binom{(n-1)/2}{k/2}_{q^{2}}$ \\ \hline
$\mathbf{n \equiv 0 \pmod{4}}$ &  $\frac{1}{2}q^{\frac{k(n-k)-1}{2}}(q^{\frac{n}{2}}-1)\binom{(n-2)/2}{(k-1)/2}_{q^{2}}$ & 
$\frac{1}{2}q^{\frac{k(n-k)}{2}}\frac{(q^{\frac{n-k}{2}}-1)(q^{\frac{k}{2}}-1)}{q^{\frac{n}{2}}+1}\binom{n/2}{k/2}_{q^{2}}$ \\ 
\end{tabular}}

\bigskip
\bigskip

\scalebox{0.9}{
\begin{tabular}{c||c|c}
$\mathbf{\binom{n}{k}_{q}^{\perp}}$ &  $\mathbf{k \equiv 3 \pmod{4}}$ & $\mathbf{k \equiv 0 \pmod{4}}$  \\ \hline \hline
$\mathbf{n \equiv 1 \pmod{4}}$ &  $\frac{1}{2}q^{\frac{k(n-k)}{2}}(q^{\frac{n-k}{2}}-1)\binom{(n-1)/2}{(k-1)/2}_{q^{2}}$ & 
$\frac{1}{2}q^{\frac{k(n-k)}{2}}(q^{\frac{k}{2}}+1)\binom{(n-1)/2}{k/2}_{q^{2}}$  \\ \hline
$\mathbf{n \equiv 2 \pmod{4}}$ &  $\frac{1}{2}q^{\frac{k(n-k)-1}{2}}(q^{\frac{n}{2}}+1)\binom{(n-2)/2}{(k-1)/2}_{q^{2}}$ & 
$\frac{1}{2}q^{\frac{k(n-k)}{2}}\frac{(q^{\frac{n-k}{2}}-1)(q^{\frac{k}{2}}+1)}{q^{\frac{n}{2}}-1}\binom{n/2}{k/2}_{q^{2}}$  \\  \hline
$\mathbf{n \equiv 3 \pmod{4}}$ &  $\frac{1}{2}q^{\frac{k(n-k)}{2}}(q^{\frac{n-k}{2}}+1)\binom{(n-1)/2}{(k-1)/2}_{q^{2}}$ & 
$\frac{1}{2}q^{\frac{k(n-k)}{2}}(q^{\frac{k}{2}}+1)\binom{(n-1)/2}{k/2}_{q^{2}}$ \\ \hline
$\mathbf{n \equiv 0 \pmod{4}}$ &  $\frac{1}{2}q^{\frac{k(n-k)-1}{2}}(q^{\frac{n}{2}}-1)\binom{(n-2)/2}{(k-1)/2}_{q^{2}}$ & 
$\frac{1}{2}q^{\frac{k(n-k)}{2}}\frac{(q^{\frac{n-k}{2}}+1)(q^{\frac{k}{2}}+1)}{q^{\frac{n}{2}}+1}\binom{n/2}{k/2}_{q^{2}}$ \\
\end{tabular}}.
\caption{The expression of $\binom{n}{k}_{q}^{\perp}$ when $q \equiv 3$ (mod $4$)}\label{Table: Euc2}
\end{center}
\end{table}

\begin{proof}
Suppose that $q \equiv 1$ (mod $4$). If $n$ and $k$ are odd, then we have 
\begin{align*}
\binom{n}{k}_{q}^{\perp}&=\frac{|\text{Euc}_{1},\text{Euc}_{n}|_{q}|\text{Euc}_{1},\text{Euc}_{n-1}|_{q}\cdots|\text{Euc}_{1},\text{Euc}_{n-k+1}|_{q}}{|\text{Euc}_{1},\text{Euc}_{k}|_{q}\cdots|\text{Euc}_{1},\text{Euc}_{1}|_{q}}\\
&=\frac{1}{2}\frac{(q^{n-1}+q^{\frac{n-1}{2}})(q^{n-2}-q^{\frac{n-3}{2}})(q^{n-3}+q^{\frac{n-3}{2}})\cdots(q^{n-k+1}-q^{\frac{n-k}{2}})(q^{n-k}+q^{\frac{n-k}{2}})}{(q^{k-1}+q^{\frac{k-1}{2}})(q^{k-2}-q^{\frac{k-3}{2}})(q^{k-3}+q^{\frac{k-3}{2}})\cdots (q-1)\cdot 1}\\
&=\frac{1}{2}q^{\frac{k(n-k)}{2}}\frac{(q^{\frac{n-1}{2}}+1)(q^{\frac{n-1}{2}}-1)(q^{\frac{n-3}{2}}+1)\cdots(q^{\frac{n-k+2}{2}}-1)(q^{\frac{n-k}{2}}+1)}{(q^{\frac{k-1}{2}}+1)(q^{\frac{k-1}{2}}-1)(q^{\frac{k-3}{2}}+1)\cdots (q-1)\cdot 1}\\
&=\frac{1}{2}q^{\frac{k(n-k)}{2}}(q^{\frac{n-k}{2}}+1)\binom{\frac{n-1}{2}}{\frac{k-1}{2}}_{q^{2}}.
\end{align*}
One can derive the expressions of $\binom{n}{k}_{q}^{\perp}$ in other cases similarly, or find them in \cite{Yo4}.
\end{proof}

Recall that the $q$-binomial coefficient $\binom{n}{k}_{q}$ is polynomial in $q$ of degree $k(n-k)$ whose coefficients are non-negative integers. The Euclidean binomial coefficient $\binom{n}{k}_{q}^{\perp}$ also has a similar property. 

\begin{theorem}\label{rational}
The Euclidean binomial coefficient $\binom{n}{k}_{q}^{\perp}$ is a polynomial of degree $k(n-k)$ in the indeterminate $q$. Moreover, $\binom{n}{k}_{q}^{\perp} \in \frac{1}{2}\mathbb{Z}[q]$.
\end{theorem}

\begin{proof}
We use the full description of the Euclidean binomial coefficient $\binom{n}{k}_{q}^{\perp}$ given by \cref{polynomials}. Since $\binom{n}{k}_{q^{2}}$ is always a polynomial in $q$ whose coefficients are non-negative integers, it is not hard to show that $\binom{n}{k}_{q}^{\perp}$ is a polynomial whose coefficients are half-integers using \cref{polynomials} except for the following cases: (1)  $q \equiv 1$ (mod $4$), $n$ and $k$ are even, (2) $q \equiv 3$ (mod $4$), $n \equiv 2 \pmod{4}$ and $k \equiv 2 \pmod{4}$, (3) $q \equiv 3$ (mod $4$), $n \equiv 2 \pmod{4}$ and $k \equiv 0 \pmod{4}$, (4) $q \equiv 3$ (mod $4$), $n \equiv 0 \pmod{4}$ and $k \equiv 2 \pmod{4}$, (5) $q \equiv 3$ (mod $4$), $n \equiv 0 \pmod{4}$ and $k \equiv 0 \pmod{4}$. For these exceptional cases, we rely on \cref{pa}(3). 

\medskip
\noindent Case 1. $q \equiv 1$ (mod $4$), $n$ and $k$ are even:
\[\frac{\left [ n \right ]_{q}^{\perp}-\left [ k \right ]_{q}^{\perp}}{\left [ n-k \right ]_{q}^{\perp}}=\frac{q^{n-1}-q^{\frac{n-2}{2}}-q^{k-1}+q^{\frac{k-2}{2}}}{q^{n-k-1}-q^{\frac{n-k-2}{2}}}=\frac{q^{\frac{k-2}{2}}(q^{\frac{k}{2}}(q^{n-k}-1)-(q^{\frac{n-k}{2}}-1))}{q^{\frac{n-k-2}{2}}(q^{\frac{n-k}{2}}-1)}.\]
Case 2. $q \equiv 3$ (mod $4$), $n \equiv 2 \pmod{4}$ and $k \equiv 2 \pmod{4}$: \[\frac{\left [ n \right ]_{q}^{\perp}-\left [ k \right ]_{q}^{\perp}}{\left [ n-k \right ]_{q}^{\perp}}=\frac{q^{n-1}+q^{\frac{n-2}{2}}-q^{k-1}-q^{\frac{k-2}{2}}}{q^{n-k-1}-q^{\frac{n-k-2}{2}}}=\frac{q^{\frac{k-2}{2}}(q^{\frac{k}{2}}(q^{n-k}-1)+(q^{\frac{n-k}{2}}-1))}{q^{\frac{n-k-2}{2}}(q^{\frac{n-k}{2}}-1)}.\]
Case 3. $q \equiv 3$ (mod $4$), $n \equiv 2 \pmod{4}$ and $k \equiv 0 \pmod{4}$:
\[\frac{\left [ n \right ]_{q}^{\perp}-\left [ k \right ]_{q}^{\perp}}{\left [ n-k \right ]_{q}^{\perp}}=\frac{q^{n-1}+q^{\frac{n-2}{2}}-q^{k-1}+q^{\frac{k-2}{2}}}{q^{n-k-1}+q^{\frac{n-k-2}{2}}}=\frac{q^{\frac{k-2}{2}}(q^{\frac{k}{2}}(q^{n-k}-1)+(q^{\frac{n-k}{2}}+1))}{q^{\frac{n-k-2}{2}}(q^{\frac{n-k}{2}}+1)}.\]
Case 4. $q \equiv 3$ (mod $4$), $n \equiv 0 \pmod{4}$ and $k \equiv 2 \pmod{4}$:
\[\frac{\left [ n \right ]_{q}^{\perp}-\left [ k \right ]_{q}^{\perp}}{\left [ n-k \right ]_{q}^{\perp}}=\frac{q^{n-1}-q^{\frac{n-2}{2}}-q^{k-1}-q^{\frac{k-2}{2}}}{q^{n-k-1}+q^{\frac{n-k-2}{2}}}=\frac{q^{\frac{k-2}{2}}(q^{\frac{k}{2}}(q^{n-k}-1)-(q^{\frac{n-k}{2}}+1))}{q^{\frac{n-k-2}{2}}(q^{\frac{n-k}{2}}+1)}.\]
Case 5. $q \equiv 3$ (mod $4$), $n \equiv 0 \pmod{4}$ and $k \equiv 0 \pmod{4}$:
\[\frac{\left [ n \right ]_{q}^{\perp}-\left [ k \right ]_{q}^{\perp}}{\left [ n-k \right ]_{q}^{\perp}}=\frac{q^{n-1}-q^{\frac{n-2}{2}}-q^{k-1}+q^{\frac{k-2}{2}}}{q^{n-k-1}-q^{\frac{n-k-2}{2}}}=\frac{q^{\frac{k-2}{2}}(q^{\frac{k}{2}}(q^{n-k}-1)-(q^{\frac{n-k}{2}}-1))}{q^{\frac{n-k-2}{2}}(q^{\frac{n-k}{2}}-1)}.\]
By \cref{pa}(2), we may assume that $q^{\frac{k-2}{2}} \geq q^{\frac{n-k-2}{2}}$, which is equivalent to $k \geq n/2$. 
Thus $\left [ n \right ]_{q}^{\perp}-\left [ k \right ]_{q}^{\perp}/\left [ n-k \right ]_{q}^{\perp}$ are polynomials in $q$ whose coefficients are integers. 
Since $\binom{n-1}{k-1}_{q}^{\perp}$ and $\binom{n-1}{k}_{q}^{\perp}$ are polynomials in $q$ whose coefficients are half-integers, we conclude our result. 
Let us now observe the highest degree in each term to compute the degree of the Euclidean binomial coefficient $\binom{n}{k}_{q}^{\perp}$. 
The degree of the Euclidean binomial coefficient $\binom{n}{k}_{q}^{\perp}$ is shown to be
\[\sum_{i=1}^{k}(n-i)-\sum_{i=1}^{k-1}i=k(n-k).\]
regardless of any cases. 
\end{proof}

\begin{example}
Let us compute $\binom{13}{5}_{q}^{\perp}$ when $q \equiv 1$ (mod $4$). 
Note that

\begin{align*}
\binom{13}{5}_{q}^{\perp}&=\frac{(q^{12}+q^{6})(q^{11}-q^{5})(q^{10}+q^{5})(q^{9}-q^{4})(q^{8}+q^{4})}{2(q^{4}+q^{2})(q^{3}-q)(q^{2}+q)(q-1)}\\
&=\frac{1}{2}q^{20}(q^{4}-q^{2}+1)(q^{4}+q^{2}+1)(q^{4}-q^{3}+q^{2}-q+1)(q^{4}+q^{3}+q^{2}+q+1)(q^{4}+1).
\end{align*}
Thus, $\binom{13}{5}_{q}^{\perp}$ is a polynomial of degree $40$ in $q$.
\end{example}

\begin{remark}
(1) The Euclidean binomial coefficient $\binom{n}{k}_{q}^{\perp}$ can be written as
\[\binom{n}{k}_{q}^{\perp}=(1+o(1))\frac{q^{k(n-k)}}{2},\]
where $o(1)$ goes to zero as $q$ goes to infinity.

\smallskip
(2) In any case, we obtain \[\lim_{q\rightarrow \infty}\frac{\binom{n}{k}_{q}^{\perp}}{\binom{n}{k}_{q}}=\frac{1}{2}.\]
Thus, in $(\mathbb{F}_{q}^{n},\text{Euc}_{n})$, the half of subspaces is roughly $k$-dimensional Euclidean subspaces. 
One can see that roughly the other half consists of $k$-dimensional Lorentzian subspaces by considering the counting formula given in \cref{section4}.

\end{remark}

\subsection{Symmetries of the polynomial coefficients of $\binom{n}{k}_{q}^{\perp}$}
By the expressions given by the complete list of the Euclidean binomial coefficient $\binom{n}{k}_{q}^{\perp}$ in \cite{Yo4}, we obtain the following expressions: If (1) $q \equiv 1$ (mod $4$), $n$ is even and $k$ is odd, or (2) $q \equiv 3$ (mod $4$), $n \equiv 2 \pmod{4}$ and $k$ is odd, or (3) $n \equiv 0 \pmod{4}$ and $k$ is odd, then we have
\begin{equation}\label{expression1}
    \binom{n}{k}_{q}^{\perp}=q^{\frac{k(n-k)-1}{2}}\sum_{i=0}^{\frac{k(n-k)+1}{2}}a_{i}q^{i}.
\end{equation}

Otherwise, we have
\begin{equation}\label{expression2}
 \binom{n}{k}_{q}^{\perp}=q^{\frac{k(n-k)}{2}}\sum_{i=0}^{\frac{k(n-k)}{2}}b_{i}q^{i}.   
\end{equation}
We show that the coefficients $a_{i}$ in \cref{expression1} and $b_{i}$ in \cref{expression2} have some sort of symmetries.
A necessary lemma to prove our claim is the following.

\begin{lemma}\label{poly}
Suppose that $q \equiv 1$ \textup{(mod $4$)}. Then
\[\binom{n}{k}_{\frac{1}{q}}^{\perp} = \begin{cases}
\quad \frac{-1}{q^{\frac{3k(n-k)-1}{2}}}\binom{n}{k}_{q}^{\perp} & \text{ if }  n \text{ is even and }k \text{ is odd,}\\ 
\quad\frac{1}{q^{\frac{3k(n-k)}{2}}}\binom{n}{k}_{q}^{\perp} & \text{ otherwise }  
\end{cases}.\]
Suppose that $q \equiv 3$ \textup{(mod $4$)}. Then
\[\binom{n}{k}_{\frac{1}{q}}^{\perp} = \begin{cases}
\quad \frac{-1}{q^{\frac{3k(n-k)-1}{2}}}\binom{n}{k}_{q}^{\perp} & \text{ if } n \equiv 0 \pmod{4}\text{ and }k\text{ is odd}, \\ 
\quad \frac{1}{q^{\frac{3k(n-k)-1}{2}}}\binom{n}{k}_{q}^{\perp} & \text{ if } n \equiv 2 \pmod{4}\text{ and }k\text{ is odd},\\
\quad \frac{-1}{q^{\frac{3k(n-k)}{2}}}\binom{n}{k}_{q}^{\perp} & \text{ if } n \equiv 1 \pmod{4} \text{ and }k \equiv 2,3 \pmod{4},  \\ 
& \quad  \text{ or }~~ n \equiv 3 \pmod{4}\text{ and }k \equiv 1,2 \pmod{4}, \\
\quad  \frac{1}{q^{\frac{3k(n-k)}{2}}}\binom{n}{k}_{q}^{\perp} & \text{ otherwise }  
.
\end{cases}\]
\end{lemma}

\begin{proof}
Suppose that $q \equiv 1$ (mod $4$), and $n$ and $k$ are odd. By the expression of the Euclidean binomial coefficient $\binom{n}{k}_{q}^{\perp}$, we have
\begin{align*}
\binom{n}{k}_{\frac{1}{q}}^{\perp}
&=\frac{1}{2}\frac{1}{q^{\frac{k(n-k)}{2}}}\frac{\left(\frac{1}{q^{\frac{n-1}{2}}}+1\right)\left(\frac{1}{q^{\frac{n-1}{2}}}-1\right)\left(\frac{1}{q^{\frac{n-3}{2}}}+1\right)\cdots\left(\frac{1}{q^{\frac{n-k+2}{2}}}-1\right)\left(\frac{1}{q^{\frac{n-k}{2}}}+1\right)}{\left(\frac{1}{q^{\frac{k-1}{2}}}+1\right)\left(\frac{1}{q^{\frac{k-1}{2}}}-1\right)\left(\frac{1}{q^{\frac{k-3}{2}}}+1\right)\cdots \left(\frac{1}{q}-1\right)\cdot 1}\\
&=\frac{1}{2}\frac{1}{q^{\frac{k(n-k)}{2}}}\frac{q^{\frac{k-1}{2}}q^{\frac{k-1}{2}}q^{\frac{k-3}{2}}\cdots q}{q^{\frac{n-1}{2}}q^{\frac{n-1}{2}}q^{\frac{n-3}{2}}\cdots q^{\frac{n-k}{2}}}\frac{(q^{\frac{n-1}{2}}+1)(q^{\frac{n-1}{2}}-1)(q^{\frac{n-3}{2}}+1)\cdots(q^{\frac{n-k+2}{2}}-1)(q^{\frac{n-k}{2}}+1)}{(q^{\frac{k-1}{2}}+1)(q^{\frac{k-1}{2}}-1)(q^{\frac{k-3}{2}}+1)\cdots (q-1)\cdot 1}\\
&=\frac{1}{q^{\frac{3k(n-k)}{2}}}\binom{n}{k}_{q}^{\perp}.
\end{align*}
Other cases can be proved similarly. 
\end{proof}

\begin{corollary}\label{symmetry}
Suppose that $q \equiv 1$ \textup{(mod $4$)}. For the coefficients $a_{i}$ in \cref{expression1} and $b_{i}$ in \cref{expression2}, \begin{enumerate}
    \item[\textup{(1)}] $a_{i}=-a_{(k(n-k)+1)/2-i}$ (anti-symmetric) if $n$ is even and $k$ is odd.
    \item[\textup{(2)}] $b_{i}=b_{k(n-k)/2-i}$ (symmetric) otherwise.

\end{enumerate}
Suppose that $q \equiv 3$ \textup{(mod $4$)}. Then, we have
\begin{enumerate}
    \item[\textup{(1)}] $a_{i}=-a_{(k(n-k)+1)/2-i}$ (anti-symmetric) if $n \equiv 0 \pmod{4}$ and $k$ is odd,
    \item[\textup{(2)}] $a_{i}=a_{(k(n-k)+1)/2-i}$ (symmetric) if $n \equiv 2 \pmod{4}$ and $k$ is odd,
    \item[\textup{(3)}] $b_{i}=-b_{k(n-k)/2-i}$ (anti-symmetric) if $n \equiv 1 \pmod{4}$ and $k \equiv 2,3 \pmod{4}$, or $n \equiv 3 \pmod{4}$ and $k \equiv 1,2 \pmod{4}$,    
    \item[\textup{(4)}] $b_{i}=b_{k(n-k)/2-i}$ (symmetric) otherwise.
\end{enumerate}
\end{corollary}

\begin{proof}
Let $q \equiv 1$ (mod $4$), $n$ be odd, and $k$ be odd. Let us denote $\binom{n}{k}_{q}^{\perp}$ by
\[\binom{n}{k}_{q}^{\perp}=\frac{1}{2}q^{\frac{k(n-k)}{2}}(b_{0}+b_{1}q+\cdots+b_{\frac{k(n-k)}{2}}q^{\frac{k(n-k)}{2}}).\]
Then, we have
\begin{align*}
\binom{n}{k}_{\frac{1}{q}}^{\perp}&=\frac{1}{2}\frac{1}{q^{\frac{k(n-k)}{2}}}(b_{0}+\frac{b_{1}}{q}+\cdots+\frac{b_{\frac{k(n-k)}{2}}}{q^{\frac{k(n-k)}{2}}})\\
&=\frac{1}{q^{\frac{3k(n-k)}{2}}}\frac{1}{2}q^{\frac{k(n-k)}{2}}(b_{0}q^{\frac{k(n-k)}{2}}+b_{1}q^{\frac{k(n-k)}{2}-1}+\cdots+b_{\frac{k(n-k)}{2}})
\end{align*}
Thus, we only need to show
\[\binom{n}{k}_{\frac{1}{q}}^{\perp}=\frac{1}{q^{\frac{3k(n-k)}{2}}}\binom{n}{k}_{q}^{\perp},\]
which is given by \cref{poly}. Other cases can be obtained by a similar way.
\end{proof}

\subsection{Combinatorial correspondence between symmetric sets and Euclidean-analogues}\label{symmetric}

Notice that $ \lim_{q \rightarrow 1}\binom{n}{k}_{q}=\binom{n}{k}$ reveals the connection between binomial coefficients and $q$-binomial coefficients. Thus, to find the correspondence between sets and the Euclidean analogues, the first step is to compute $\lim_{q \rightarrow 1}\binom{n}{k}_{q}^{\perp}$. However, $\lim_{q \rightarrow 1}\binom{n}{k}_{q}^{\perp}$ when $q \equiv 1$ (mod $4$) is not always the same as $\lim_{q \rightarrow 1}\binom{n}{k}_{q}^{\perp}$ when $q \equiv 3$ (mod $4$). For example, if $q \equiv 1$ (mod $4$), $n$ is odd and $k$ is even, we have
\[\lim_{q \rightarrow 1} \binom{n}{k}_{q}^{\perp}=\lim_{q \rightarrow 1}\left (\frac{1}{2}q^{\frac{k(n-k)}{2}}(q^{\frac{k}{2}}+1)\binom{\frac{n-1}{2}}{\frac{k}{2}}_{q^{2}}\right )=\binom{\frac{n-1}{2}}{\frac{k}{2}}.\]
On the other hand, if $q \equiv 3$ (mod $4$), $n \equiv 3 \pmod{4}$, and $k \equiv 2 \pmod{4}$, we obtain
\[\lim_{q \rightarrow 1} \binom{n}{k}_{q}^{\perp}=\lim_{q \rightarrow 1}\left (\frac{1}{2}q^{\frac{k(n-k)}{2}}(q^{\frac{k}{2}}-1)\binom{\frac{n-1}{2}}{\frac{k}{2}}_{q^{2}}\right )=0.\]

Thus, we attempt to compute $\lim_{q \rightarrow -1}\binom{n}{k}_{q}^{\perp}$ when $q \equiv 3$ (mod $4$). 
It turns out that the limits $\lim_{q \rightarrow 1} \binom{n}{k}_{q}^{\perp}$ when $q \equiv 1$ (mod $4$) and $\lim_{q \rightarrow -1}\binom{n}{k}_{q}^{\perp}$ when $q\equiv 3$ (mod $4$) are the same. 

\begin{lemma}\label{limit}
For any $1 \le k \le n$, we have 
\begin{enumerate}
    \item[\textup{(1)}] If $q \equiv 1$ \textup{(mod $4$)}, $\lim_{q \to 1}\binom{n}{k}_{q}^{\perp}=\binom{\left \lfloor n/2 \right \rfloor}{\left \lfloor k/2 \right \rfloor}$, and $0$ if $n$ is even and $k$ is odd,
    \item[\textup{(2)}] If $q \equiv 3$ \textup{(mod $4$)}, $\lim_{q \to -1}\binom{n}{k}_{q}^{\perp}=\binom{\left \lfloor n/2 \right \rfloor}{\left \lfloor k/2 \right \rfloor}$, and $0$ if $n$ is even and $k$ is odd.
\end{enumerate}
\end{lemma}

\begin{proof}
Notice that
\[\lim_{q \rightarrow 1}\binom{n}{k}_{q^{2}}=\binom{n}{k} \text{ and }\lim_{q \rightarrow -1}\binom{n}{k}_{q^{2}}=\binom{n}{k}.\] 
The limits can be directly computed by the expressions in \cref{polynomials}.  \end{proof}

\cref{limit} indicates that combinatorial descriptions of sets corresponding to the Euclidean binomial coefficient $\binom{n}{k}_{q}^{\perp}$ do not need to depend on whether $q \equiv 1$ (mod $4$) or $q \equiv 3$ (mod $4$). We now show that the limit of $\binom{n}{k}_{q}^{\perp}$ in \cref{limit} is the number of symmetric $k$-sets in $\mathbb{Z}/(n+1)\mathbb{Z}$. Here,  we call a subset $A$ of $\mathbb{Z}/(n+1)\mathbb{Z}$ a \textbf{symmetric} $k$-set if $|A|=k$, $0 \notin A$, and $A=-A$.

\begin{theorem} \label{symm} For any $1 \le k \le n$, the limit of the Euclidean binomial coefficient $\binom{n}{k}_{q}^{\perp}$ is the number of symmetric $k$-sets in $\mathbb{Z}/(n+1)\mathbb{Z}$. 
\end{theorem}

\begin{proof}
It suffices to show that the limit in \cref{limit} is the number of symmetric $k$-sets in $\mathbb{Z}/(n+1)\mathbb{Z}$ for each case.
Suppose that $n$ and $k$ are odd. Then, we have \[\mathbb{Z}/(n+1)\mathbb{Z}=\left \{ 0,1,\ldots,\frac{n-1}{2},\frac{n+1}{2},\frac{n+3}{2}\ldots,n \right \}.\]
To make a symmetric $k$-set, we only need to choose $(k-1)/2$ elements in $\left \{ 1,\ldots,(n-1)/2 \right \}$. Then, the other $(k-1)/2$ elements are determined immediately. By adding the element $(n+1)/2$ in the set, we complete it to make a symmetric $k$-set. Thus, there are $\binom{(n-1)/2}{(k-1)/2}$ ways to make symmetric $k$-sets in $\mathbb{Z}/(n+1)\mathbb{Z}$. Similarly, the number of symmetric $k$-sets in $\mathbb{Z}/(n+1)\mathbb{Z}$ is $\binom{(n-1)/2}{k/2}$ if $n$ is odd and $k$ is even, and the number of symmetric $k$-sets in $\mathbb{Z}/(n+1)\mathbb{Z}$ is $\binom{n/2}{k/2}$ if $n$ and $k$ are even. When $n$ is even and $k$ is odd, it is not hard to check that symmetric $k$-sets do not exist.
\end{proof}

\begin{example}
There are six symmetric $4$-sets in $\mathbb{Z}/9\mathbb{Z}$ as follows:
\[\left\{ 1,2,7,8 \right\},\left\{ 1,3,6,8 \right\},\left\{ 1,4,5,8 \right\},\left\{ 2,3,6,7 \right\},\left\{ 2,4,5,7 \right\},\left\{ 3,4,5,6 \right\}.\]
By \cref{symm}, the number of symmetric $4$-sets in $\mathbb{Z}/9\mathbb{Z}$ is also given by
\begin{align*}
    \lim_{q\rightarrow 1}\binom{8}{4}_{q}^{\perp}&=\lim_{q\rightarrow 1}\frac{(q^{7}-q^{3})(q^{6}+q^{3})(q^{5}-q^{2})(q^{4}+q^{2})}{2(q^{2}-1)(q+1)(q-1)}\\&=\frac{1}{2}\lim_{q\rightarrow 1}q^{8}(q^{2}+1)^{2}(q^{2}-q+1)(q^{2}+q+1)=6.
\end{align*}
\end{example}

Next, for completeness, we give the limit of $\binom{n}{k}_{q}^{\perp}$ as $q$ goes to $-1$ when $q \equiv 1$ (mod $4$) and as $q$ goes to $1$ when $q \equiv 3$ (mod $4$).
We do not know any combinatorial description of this limit.
The proof is straightforward using \cref{polynomials}, and thus omitted.

\begin{proposition}\label{limit2}
For any $1 \le k<n$, we have 
\begin{enumerate}
    \item[\textup{(1)}] If $q \equiv 1$ \textup{(mod $4$)}, 
    \[\lim_{q \to -1}\binom{n}{k}_{q}^{\perp}=\begin{cases} ~~
    \binom{\left \lfloor n/2 \right \rfloor}{\left \lfloor k/2 \right \rfloor}
 &  \text{ if } n \equiv 1 ~(\textup{mod }4), ~~ k \equiv 1 ~ (\textup{mod }4) \text{ or } k \equiv 0 ~ (\textup{mod }4),  \\
 & \quad  n \equiv 3 ~(\textup{mod }4),~k \equiv 3 ~(\textup{mod }4)\text{ or }k \equiv 0 ~(\textup{mod }4), \\
 & \quad n \equiv 0 ~(\textup{mod }4), ~k \equiv 0 ~(\textup{mod }4), \\
~~\binom{ n/2  -1}{\left \lfloor k/2 \right \rfloor} & \text{ if } n \equiv 2 ~(\textup{mod }4), ~k \equiv 1 ~(\textup{mod }4),\text{ or }k \equiv 3 ~(\textup{mod }4) \text{ or } k \equiv 0 ~(\textup{mod }4), \\
~~\binom{ n/2  -1}{ k/2  -1} & \text{ if } n \equiv 2 ~(\textup{mod }4),~k \equiv 2 ~(\textup{mod }4), \\
\; \; 0 & \text{ otherwise } .
\end{cases}\]

    \item[\textup{(2)}] If $q \equiv 3$ \textup{(mod $4$)}, $\lim_{q \to 1}\binom{n}{k}_{q}^{\perp}$is the same with $\lim_{q \to -1}\binom{n}{k}_{q}^{\perp}$ when $q \equiv 1$ \textup{(mod $4$)}.
\end{enumerate}
\end{proposition}


\subsection{The Euclidean flags and the orthogonal group $O(n,q)$}  

\begin{definition}
Let $V$ be a $n$-dimensional Euclidean space over $\mathbb{F}_{q}$. A \textbf{Euclidean flag} of length $m$ is a sequence of Euclidean subspaces of $V$ as follows:
\[\{0\}=V_{0} \subset V_{1} \subset \cdots \subset V_{m}= V.\]
Let $n_{k}=\text{dim}V_{k}$, where $k=1,2,\ldots,m$. Then, we have $k \le n$. Let us call $(n_{1},n_{2},\ldots,n_{m})$ the \textbf{signature} of a Euclidean flag. A \textbf{complete Euclidean flag} of $V$ is a Euclidean flag of length $n$. In this case, we have $n_{k}=k$ for any $i=0,1,\ldots,n$.
\end{definition}

We count the number of flags of length $m$ with signature $(n_{1},n_{2},\ldots,n_{m})$. Since dim$V_{1}=n_{1}$, there are $\binom{n}{n_{1}}_{q}^{\perp}$ many choices for $V_{1}$. Similarly, there are $\binom{n-n_{1}}{n_{2}}_{q}^{\perp}$ choices for $V_{2}$. By repeating this process, the number of flags of length $m$ with the signature $(n_{1},n_{2},\ldots,n_{m})$ is given by
\[\binom{n}{n_{1}}_{q}^{\perp}\binom{n-n_{1}}{n_{2}}_{q}^{\perp}\cdots\binom{n-n_{1}-\cdots-n_{m-1}}{n_{m}}_{q}^{\perp}.
\]
We give a new terminology for this count. 

\begin{definition}
Let $n$ be a positive integer with $n=\sum_{i=1}^{m}n_{i}$, where $n_{i}$ is a positive integer for each $i=1,\ldots,m$. We define the \textbf{Euclidean multinomial coefficient} as follows:
\[\binom{n}{n_{1}~n_{2}~\cdots ~n_{m}}_{q}^{\perp}:=\frac{\left [ n \right ]_{q}^{\perp}!}{\left [ n_{1} \right ]_{q}^{\perp}!\left [ n_{2} \right ]_{q}^{\perp}!\cdots \left [ n_{m} \right ]_{q}^{\perp}!}=\binom{n}{n_{1}}_{q}^{\perp}\binom{n-n_{1}}{n_{2}}_{q}^{\perp}\cdots\binom{n-n_{1}-\cdots-n_{m-1}}{n_{m}}_{q}^{\perp}.\]
\end{definition}
Since a complete Euclidean flag is a Euclidean flag of length $n$, the number of complete Euclidean flags is $\left [  n \right ]_{q}^{\perp}!$. Next, we find a way to count the size of orthogonal groups using the number of complete Euclidean flags in $E_{n}(q)$. 

\begin{theorem}\label{sizeofog}
The size of the orthogonal group $O(n,q)$ over $\mathbb{F}_{q}$ with \textup{Euc}$_{n}$ is given as follows:
\begin{equation}\label{og}
    |O(n,q)|=2^{n}\left [ n \right ]_{q}^{\perp}!.
\end{equation}
\end{theorem}

\begin{proof}
Note that the number of Euclidean flags of length $n$ in $E_{n}(q)$ is $\left [ n \right ]_{q}^{\perp}!$. We provide another way to find the number of the Euclidean flags in $E_{n}(q)$. Given a Euclidean flag with an orthonormal basis $\left \{e_{1},\ldots,e_{n} \right \}$
\[\text{span}(e_{1}) \subset \text{span}(e_{1},e_{2})\subset \cdots \subset \text{span}(e_{1},e_{2},\ldots,e_{n}),\]
the flag is unchanged if we change $e_{i}$ to $-e_{i}$ for any $i$, which yields $2^{n}$ cases. Thus Euclidean flags are bijective up to a factor of $2^{n}$ with orthonormal bases. Since the orthogonal group $O(n,q)$ over $\mathbb{F}_{q}$ with Euc$_{n}$ is bijective with orthonormal bases of $(\mathbb{F}_{q}^{n},\text{Euc}_{n})$, we obtain
\begin{align*}
\left [ n \right ]_{q}^{\perp}!&=\text{the number of the Euclidean flags}\\
&=\text{the number of orthonormal bases up to }\pm\\
&=\frac{|O(n,q)|}{2^{n}},
\end{align*}
which yields our desired result. 
\end{proof}
The formula of the size of orthogonal groups over finite fields in \cref{sizeofog} matches with the size of symmetric groups $S_{n}$, and general linear groups over finite fields $GL(n,q)$ using brackets as follows:
$|S_{n}|=n! \quad \textup{and} \quad |GL(n,q)|=q^{n(n-1)/2}(q-1)^{n}[n]_{q}!.$
Furthermore, this formula redrives a usual formula for the size of the orthogonal group $O(n,q)$. 
\begin{corollary}\cite{Ta}\label{ogf}
For any $n \geq 3$, the size of the orthogonal group $O(n,q)$ over $\mathbb{F}_{q}$ with \textup{Euc}$_{n}$ is given by the following: if $n=2k+1$, then we have
\[|O(2k+1,q)|=2q^{k^{2}}\prod^{k}_{i=1}\big(q^{2i}-1\big). \]
If $n=2k$, and $q \equiv 1$ \textup{(mod $4$)}, we have
\[|O(2k,q)|=2q^{k(k-1)}\big(q^{k}-1\big)\prod^{k-1}_{i=1}\big(q^{2i}-1\big).\]
If $n=2k$, and $q \equiv 3$ \textup{(mod $4$)}, we have
\[|O(2k,q)|=2q^{k(k-1)}\big(q^{k}+1\big)\prod^{k-1}_{i=1}\big(q^{2i}-1\big).\]
\end{corollary}
\begin{proof}
Let $n=2k+1$ and $q \equiv 1$ (mod $4$). By \cref{og} and \cref{dotline}, 
\begin{align*}
    |O(2k+1,q)|&=2^{2k+1}[2k+1]_{q}^{\perp}!\\
    &=2^{2k+1}\cdot1\cdot \Big(\frac{q-1}{2} \Big) \Big ( \frac{q^{2}+q}{2} \Big) \Big( \frac{q^{3}-q}{2}\Big) \Big(\frac{q^{4}+q^{2}}{2} \Big)\cdots  \Big(\frac{q^{2k-1}-q^{k-1}}{2} \Big)\Big( \frac{q^{2k}+q^{k}}{2} \Big)\\
    &=2q(q-1)(q+1)q(q^{3}-q)(q^{3}+q)\cdots q(q^{2k-1}-q^{k-1})(q^{2k-1}+q^{k-1})\\
    &=2q^{k}(q^{2}-1)(q^{6}-q^{2})\cdots(q^{2(2k-1)}-q^{2(k-1)})\\
    &=2q^{k}q^{2}\cdots q^{2(k-1)}(q^{2}-1)(q^{4}-1)\cdots (q^{2k}-1)\\
    &=2q^{k^{2}}\prod^{k}_{i=1}\big(q^{2i}-1\big).
\end{align*}
Other cases can be computed similarly.
\end{proof}

Let us discuss how to count the Euclidean binomial coefficient $\binom{n}{k}_{q}^{\perp}$ in other ways. By using the number of the Euclidean flags, we obtain
\begin{equation}\label{oo}
\binom{n}{k}_{q}^{\perp}=\frac{\left [ n \right ]_{q}^{\perp}!}{\left [ k \right ]_{q}^{\perp}!\left [ n-k \right ]_{q}^{\perp}!}=\frac{|O(n,q)|}{|O(k,q)\times O(n-k,q)|}\cdot \frac{2^{k}\cdot 2^{n-k}}{2^{n}}=\left | \frac{O(n,q)}{O(k,q)\times O(n-k,q)} \right |.
\end{equation}
On the other hand, the orthogonal group $O(n,q)$ acts transitively on $Gr_{q}^{\perp}(k,n)$ by Witt's extension theorem. Then, the stabilizer of any element in $Gr_{q}^{\perp}(k,n)$ is $O(k,q)\times O(n-k,q)$. Hence, there is an isomorphism between $Gr_{q}^{\perp}(k,n)$ and $\frac{O(n,q)}{O(k,q)\times O(n-k,q)}$. Therefore, we have another way to obtain \cref{oo} by the theory of group actions.

\subsection{The M\"{o}bius function of $E_{n}(q)$}
Let $P=(X,\le)$ be a poset. 
The M\"{o}bius function $\mu:X\times X \rightarrow \mathbb{R}$ of $P$ is defined by 
\[\mu(x,y)=\begin{cases}
1 & \text{ if } x= y \\ 
-\sum_{x \leq z < y}\mu(x,z) & \text{ if } x \ne  y 
\end{cases}.\]

As examples, one can find the M\"{o}bius functions of the Boolean algebra $B_{n}$ and the poset $L_{n}(q)$ constructed by subspaces of $\mathbb{F}_{q}^{n}$ under set-inclusion. These two posets have the least element and a nice property, called the \textbf{homogeneity property}: for any $x,y$ with $x \leq y$, there is an element
$z$ such that the interval $\left [x,y  \right ]$ is isomorphic to the interval $\left [0,c  \right ]$ (with $\left [x,y  \right ]$ and $\left [0,c  \right ]$ as posets), where $0$ is the least element of the poset. By these properties, we have $\mu(x,y) = \mu (0, c)$. This helps us to reduce the complexity of the computations for the M\"{o}bius functions. 
Recall that the Euclidean poset $E_{n}(q)$ is the poset constructed by Euclidean subspaces of $(\mathbb{F}_{q}^{n},\textup{Euc}_{n})$ under set-inclusion.
We notice that the Euclidean poset $E_{n}(q)$ also has the homogeneity property. This means that, for two Euclidean subspaces $U$ and $W$ of $\mathbb{F}_{q}^{n}$ with $U \leq W$, the interval $\left [ 
U,W \right ]$ is isomorphic to the poset of $W/U$. 
Thus, we can consider the M\"{o}bius function of $E_{n}(q)$ as a one-variable function.
More discussions on this topic can be found in \cite{Ca2, St3}.

Note that $B_{n}$ only depends on the cardinality of subsets because the symmetric group $S_{n}$ acts transitively on the set of subsets whose cardinalities are the same. Similarly, $L_{n}(q)$ only depends on the dimension of subspaces since the general linear group $GL(n,q)$ acts transitively on the set of subspaces of the same dimension. We provide the M\"{o}bius functions of $B_{n}$ and $L_{n}(q)$ as follows:

\begin{theorem}\textup{\cite{Ca2}}
The M\"{o}bius functions of $B_{n}$ and $L_{n}(q)$ are the following:
\begin{enumerate}
\item[\textup{(1)}] Given a subset $X$ of $\left \{ 1,2,\ldots,n \right \}$, we have $\mu(\left \{0  \right \},X)=(-1)^{|X|}$,
\item[\textup{(2)}] Given a $k$-dimensional subspace $V$ of $\mathbb{F}_{q}^{n}$, we have $\mu(\left \{0  \right \},V)=(-1)^{k}q^{\frac{k(k-1)}{2}}$.
\end{enumerate}
\end{theorem}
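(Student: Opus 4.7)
The plan is to handle both posets uniformly by exploiting homogeneity and induction on the size of the interval. By the homogeneity property mentioned just before the statement, $\mu(\{0\},X)$ in $B_n$ depends only on $|X|$, and $\mu(\{0\},V)$ in $L_n(q)$ depends only on $\dim V$. So I set $\mu_k^B:=\mu(\{0\},X)$ for $|X|=k$ and $\mu_k^L:=\mu(\{0\},V)$ for $\dim V=k$ and proceed by induction on $k$, using the fundamental identity \cref{00} in the form
\[
\sum_{0\le z\le y}\mu(0,z)=0\qquad(y\ne 0).
\]

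For $B_n$: the interval $[\emptyset,X]$ is itself isomorphic to $B_{|X|}$, so \cref{00} says $\sum_{k=0}^{n}\binom{n}{k}\mu_k^B=0$ whenever $n\ge 1$. With $\mu_0^B=1$ and the inductive hypothesis $\mu_k^B=(-1)^k$ for $k<n$, solving gives $\mu_n^B=-\sum_{k=0}^{n-1}(-1)^k\binom{n}{k}=(-1)^n$, using the binomial identity $(1-1)^n=0$. This step is routine.

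For $L_n(q)$: the interval $[\{0\},V]$ is again isomorphic to the full subspace lattice $L_n(q)$ where $n=\dim V$, and the number of $k$-dimensional subspaces of $V$ is $\binom{n}{k}_q$. So \cref{00} becomes $\sum_{k=0}^{n}\binom{n}{k}_q\mu_k^L=0$ for $n\ge 1$. With $\mu_0^L=1$ and the inductive hypothesis $\mu_k^L=(-1)^k q^{k(k-1)/2}$ for $k<n$, the claim $\mu_n^L=(-1)^n q^{n(n-1)/2}$ is equivalent to the $q$-binomial identity
\[
\sum_{k=0}^{n}(-1)^k q^{\binom{k}{2}}\binom{n}{k}_q=0\qquad(n\ge 1).
\]
This is a standard consequence of the $q$-binomial theorem $\prod_{i=0}^{n-1}(1-xq^i)=\sum_{k=0}^{n}(-1)^k q^{\binom{k}{2}}\binom{n}{k}_q x^k$ evaluated at $x=1$ (the product vanishes since the $i=0$ factor is $1-1$).

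The main obstacle is the $q$-binomial identity above; one can either invoke the $q$-binomial theorem, or prove the identity inductively using the Pascal-type recursion $\binom{n}{k}_q=\binom{n-1}{k-1}_q+q^k\binom{n-1}{k}_q$ to reduce the alternating sum to the case $n-1$. Either route keeps the proof short once the homogeneity reduction is in place, and the parallel between the two cases mirrors the $q$-analogue dictionary emphasized throughout the paper.
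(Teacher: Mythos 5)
Your proof is correct and follows essentially the route the paper points to: the paper omits the argument, citing \cite{Ba} and \cite{Ca1} and noting that "the key ideas are to use the binomial theorem and the $q$-binomial theorem," which is precisely your homogeneity-plus-induction argument reduced to the identities $\sum_{k}(-1)^k\binom{n}{k}=0$ and $\sum_{k}(-1)^k q^{\binom{k}{2}}\binom{n}{k}_q=0$. It also mirrors the induction scheme the paper itself uses for the Euclidean poset in its Theorem on $\mu(\{0\},V)=(-1)^n b_n$, so no further changes are needed.
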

The key ideas are to use the binomial theorem and the $q$-binomial theorem. To compute the M\"{o}bius function of the Euclidean poset $E_{n}(q)$, we need the right binomial theorem for the Euclidean analogues, which we do not have. Moreover, there are some difficulties to obtain the explicit formula; thus we just give an algorithm to compute the M\"{o}bius function of $E_{n}(q)$. In the following proposition, \cref{0} plays the role of the binomial theorem for Euclidean analogues.  

\begin{proposition}\label{mobi}
Let $V$ be a $n$-dimensional Euclidean space. The M\"{o}bius function of $E_{n}(q)$ is given by
\[
\mu(\left \{0  \right \},V)=(-1)^{n}b_{n},
\] 
where $b_{0}=1$ and $b_{n}$ can be obtained recursively by the equation
\begin{equation}\label{0}\sum_{k=0}^{n}(-1)^{k}b_{k}\binom{n}{k}_{q}^{\perp}=0.
\end{equation}
\end{proposition}
\begin{proof}
We prove it by induction. If $n=0$, then $\mu(\left \{ 0 \right \},V)=(-1)^{0}b_{0}=1$. Suppose that the statement is true for $k<n$. Since there are $\binom{n}{k}_{q}^{\perp}$ many $k$-dimensional Euclidean subspaces in $V$, we obtain
\begin{align*}
\mu(0,V)&=-\sum_{0\leq W<V}\mu(0,W)\\
&=-\left ( \sum_{0\leq W<V}(-1)^{\text{dim}W}b_{\text{dim}W} \right )\\
&=-\left( \sum_{k=0}^{n-1}(-1)^{k}b_{k}\binom{n}{k}_{q}^{\perp}\right)\\
&=-\left ( \sum_{k=0}^{n}(-1)^{k}b_{k}\binom{n}{k}_{q}^{\perp} -(-1)^{n}b_{n}\right )\\
&=(-1)^{n}b_{n}
\end{align*}
by the induction hypothesis and \cref{0}.
\end{proof}

Note that $E_{n}(q)$ only depends on the dimension of subspaces since and the orthogonal group $O(n,q)$ acts transitively on the set of Euclidean subspaces that have the same dimension.

\begin{example}\label{b}
Let us compute the M\"{o}bius function for $E_{3}(q)$ if $q \equiv 1$ (mod $4$). Recursively, by \cref{dotb}, \cref{0}, and \cref{dotline}, we obtain
\begin{align*}
b_{1}&=b_{0}=1;\\ 
b_{2}&=-1+\binom{2}{1}_{q}^{\perp}=-1+\frac{q-1}{2}=\frac{q-3}{2};\\
b_{3}&=1-\left (  \binom{3}{1}_{q}^{\perp}+\binom{3}{2}_{q}^{\perp}\right )+\binom{2}{1}_{q}^{\perp}\binom{3}{2}_{q}^{\perp}=1-(q^{2}+q)+\frac{q^{3}-q}{4}.
\end{align*}
Therefore, we have $\mu(\left \{ 0 \right \},\text{Euc}_{3})=(-1)^{3}b_{3}=-1+(q^{2}+q)-\frac{q^{3}-1}{4}.$
\end{example}

\section{The Lorentzian poset $LO_{n}(q)$ and other types}\label{section4}

\subsection{Counting formulas and their limits}\label{other limits}
Let us introduce some terminologies: 
\begin{align*}
&~(1) ~ |\text{Euc}_{k},\text{Euc}_{n}|_{q}:=\text{the number of the set of $k$-dimensional Euclidean subspaces of } (\mathbb{F}_{q}^{n},\text{Euc}_{n}),\\
&~(2) ~ |\text{Lor}_{k},\text{Euc}_{n}|_{q}:=\text{the number of the set of $k$-dimensional Lorentzian }\text{subspaces of } (\mathbb{F}_{q}^{n},\text{Euc}_{n}),\\
&~(3) ~ |\text{Euc}_{k},\text{Lor}_{n}|_{q}:=\text{the number of the set of $k$-dimensional Euclidean }\text{subspaces of }(\mathbb{F}_{q}^{n},\text{Lor}_{n}),\\
&~(4) ~ |\text{Lor}_{k},\text{Lor}_{n}|_{q}:=\text{the number of the set of $k$-dimensional Lorentzian }\text{subspaces of }(\mathbb{F}_{q}^{n},\text{Lor}_{n}).
\end{align*}

For completeness, we summarize all analogues of binomial coefficients and omit details since the strategies of the proofs are similar to $\binom{n}{k}_{q}^{\perp}$. 
\begin{theorem}\label{other counts} For $1 \le k < n$, we have
\begin{align*} 
&\binom{n}{k}_{q}^{\perp}:=
|\textup{Euc}_{k},\textup{Euc}_{n}|_{q}=\frac{|\textup{Euc}_{1},\textup{Euc}_{n}|_{q}|\textup{Euc}_{1},\textup{Euc}_{n-1}|_{q}\cdots|\textup{Euc}_{1},\textup{Euc}_{n-k+1}|_{q}}{|\textup{Euc}_{1},\textup{Euc}_{k}|_{q}\cdots|\textup{Euc}_{1},\textup{Euc}_{1}|_{q}},\\
&\binom{\overline{n}}{k}_{q}^{\perp}:=
|\textup{Euc}_{k},\textup{Lor}_{n}|_{q}=\frac{|\textup{Euc}_{1},\textup{Lor}_{n}|_{q}|\textup{Euc}_{1},\textup{Lor}_{n-1}|_{q}\cdots|\textup{Euc}_{1},\textup{Lor}_{n-k+1}|_{q}}{|\textup{Euc}_{1},\textup{Euc}_{k}|_{q}\cdots|\textup{Euc}_{1},\textup{Euc}_{1}|_{q}},\\
&\binom{n}{\overline{k}}_{q}^{\perp}:=|\textup{Lor}_{k},\textup{Euc}_{n}|_{q}=\frac{|\textup{Lor}_{1},\textup{Euc}_{n}|_{q}}{|\textup{Lor}_{1},\textup{Lor}_{k}|_{q}}\binom{\overline{n-1}}{k-1}_{q}^{\perp},\\
&\binom{\overline{n}}{\overline{k}}_{q}^{\perp}:=|\textup{Lor}_{k},\textup{Lor}_{n}|_{q}=\frac{|\textup{Lor}_{1},\textup{Lor}_{n}|_{q}}{|\textup{Lor}_{1},\textup{Lor}_{k}|_{q}}\binom{n-1}{k-1}_{q}^{\perp}.
\end{align*}
\end{theorem}
We adopt the convention that $|\text{Lor}_{0}, \text{Euc}_{n}|_{q}=|\text{Euc}_{0}, \text{Lor}_{n}|_{q}=|\text{Lor}_{0}, \text{Lor}_{n}|_{q}=1$.
Recall that the counts $|\text{Euc}_{1}, \text{Euc}_{n}|_{q}$, $|\text{Lor}_{1}, \text{Euc}_{n}|_{q}$, $|\text{Euc}_{1}, \text{Lor}_{n}|_{q}$, and $|\text{Lor}_{1}, \text{Lor}_{n}|_{q}$ are discussed in \cref{dotline} and \cref{lambdadot}. 
As an example of \cref{other counts}, we revisit \cref{all2} and drive the number of $2$-dimensional Lorentzian subspaces of $(\mathbb{F}_{3}^{3},\text{Euc}_{3})$ computing
\[\binom{3}{\overline{2}}_{3}^{\perp}=\frac{|\text{Lor}_{1},\text{Euc}_{3}|_{3}}{|\text{Lor}_{1},\text{Lor}_{2}|_{3}}\binom{\overline{2}}{1}_{3}^{\perp}=\frac{|\text{Lor}_{1},\text{Euc}_{3}|_{3}|\text{Euc}_{1},\text{Lor}_{2}|_{3}}{|\text{Lor}_{1},\text{Lor}_{2}|_{3}}=6.\]

Similar to \cref{polynomials}, $\binom{n}{\overline{k}}_{q}^{\perp}$ can be written in terms of the $q$-binomial coefficients and some polynomials, and  $\binom{n}{\overline{k}}_{q}^{\perp}$ is a polynomial in $q$ whose coefficients are half-integers. 
The proof can be obtained by a similar way with \cref{polynomials}.
\begin{proposition}\label{polynomial2}
The expression of $\binom{n}{\overline{k}}_{q}^{\perp}$ is given by the $q$-binomial coefficients as in \textup{\cref{Table: Lorent1}} and \textup{\cref{Table: Lorent2}}. 
Moreover, $\binom{n}{\overline{k}}_{q}^{\perp} \in \frac{1}{2}\mathbb{Z}[q]$.

\begin{table}[H]
\begin{center}
\renewcommand{\arraystretch}{1.7}
\scalebox{0.9}{
\begin{tabular}{c||c|c}
$\mathbf{\binom{n}{\overline{k}}_{q}^{\perp}}$ &  $\mathbf{k}$ \textbf{\textup{is odd}} & $\mathbf{k}$ \textbf{\textup{is even}} \\ \hline \hline
$\mathbf{n}$ \textbf{\textup{is odd}} &  $\frac{1}{2}q^{\frac{k(n-k)}{2}}(q^{\frac{n-k}{2}}-1)\binom{(n-1)/2}{(k-1)/2}_{q^{2}}$ & 
$\frac{1}{2}q^{\frac{k(n-k)}{2}}(q^{\frac{k}{2}}-1)\binom{(n-1)/2}{k/2}_{q^{2}}$ \\ \hline
$\mathbf{n}$ \textbf{\textup{is even}} &  $\frac{1}{2}q^{\frac{k(n-k)-1}{2}}(q^{\frac{n}{2}}-1)\binom{(n-2)/2}{(k-1)/2}_{q^{2}}$ & 
$\frac{1}{2}q^{\frac{k(n-k)}{2}}\frac{(q^{\frac{n-k}{2}}-1)(q^{\frac{k}{2}}-1)}{q^{\frac{n}{2}}+1}\binom{n/2}{k/2}_{q^{2}}$ \\ 
\end{tabular}}.
\caption{The expression of $\binom{n}{\overline{k}}_{q}^{\perp}$ when $q \equiv 1$ (mod $4$)}\label{Table: Lorent1}
\end{center}
\end{table}

\begin{table}[H]
\begin{center}
\renewcommand{\arraystretch}{1.7}
\scalebox{0.9}{
\begin{tabular}{c||c|c}
$\mathbf{\binom{n}{\overline{k}}_{q}^{\perp}}$ &  $\mathbf{k \equiv 1 \pmod{4}}$ & $\mathbf{k \equiv 2 \pmod{4}}$  \\ \hline \hline
$\mathbf{n \equiv 1 \pmod{4}}$ &  $\frac{1}{2}q^{\frac{k(n-k)}{2}}(q^{\frac{n-k}{2}}-1)\binom{(n-1)/2}{(k-1)/2}_{q^{2}}$ & 
$\frac{1}{2}q^{\frac{k(n-k)}{2}}(q^{\frac{k}{2}}+1)\binom{(n-1)/2}{k/2}_{q^{2}}$  \\ \hline
$\mathbf{n \equiv 2 \pmod{4}}$ &  $\frac{1}{2}q^{\frac{k(n-k)-1}{2}}(q^{\frac{n}{2}}+1)\binom{(n-2)/2}{(k-1)/2}_{q^{2}}$ & 
$\frac{1}{2}q^{\frac{k(n-k)}{2}}\frac{(q^{\frac{n-k}{2}}+1)(q^{\frac{k}{2}}-1)}{q^{\frac{n}{2}}-1}\binom{n/2}{k/2}_{q^{2}}$  \\  \hline
$\mathbf{n \equiv 3 \pmod{4}}$ &  $\frac{1}{2}q^{\frac{k(n-k)}{2}}(q^{\frac{n-k}{2}}+1)\binom{(n-1)/2}{(k-1)/2}_{q^{2}}$ & 
$\frac{1}{2}q^{\frac{k(n-k)}{2}}(q^{\frac{k}{2}}+1)\binom{(n-1)/2}{k/2}_{q^{2}}$ \\ \hline
$\mathbf{n \equiv 0 \pmod{4}}$ &  $\frac{1}{2}q^{\frac{k(n-k)-1}{2}}(q^{\frac{n}{2}}-1)\binom{(n-2)/2}{(k-1)/2}_{q^{2}}$ & 
$\frac{1}{2}q^{\frac{k(n-k)}{2}}\frac{(q^{\frac{n-k}{2}}+1)(q^{\frac{k}{2}}+1)}{q^{\frac{n}{2}}+1}\binom{n/2}{k/2}_{q^{2}}$ \\ 
\end{tabular}
}
\end{center}
\end{table}
\smallskip

\begin{table}
\begin{center}
\scalebox{0.9}{
\begin{tabular}{c||c|c}
$\mathbf{\binom{n}{\overline{k}}_{q}^{\perp}}$ &  $\mathbf{k \equiv 3 \pmod{4}}$ & $\mathbf{k \equiv 0 \pmod{4}}$  \\ \hline \hline
$\mathbf{n \equiv 1 \pmod{4}}$ &  $\frac{1}{2}q^{\frac{k(n-k)}{2}}(q^{\frac{n-k}{2}}+1)\binom{(n-1)/2}{(k-1)/2}_{q^{2}}$ & 
$\frac{1}{2}q^{\frac{k(n-k)}{2}}(q^{\frac{k}{2}}-1)\binom{(n-1)/2}{k/2}_{q^{2}}$  \\ \hline
$\mathbf{n \equiv 2 \pmod{4}}$ &  $\frac{1}{2}q^{\frac{k(n-k)-1}{2}}(q^{\frac{n}{2}}+1)\binom{(n-2)/2}{(k-1)/2}_{q^{2}}$ & 
$\frac{1}{2}q^{\frac{k(n-k)}{2}}\frac{(q^{\frac{n-k}{2}}-1)(q^{\frac{k}{2}}+1)}{q^{\frac{n}{2}}-1}\binom{n/2}{k/2}_{q^{2}}$  \\  \hline
$\mathbf{n \equiv 3 \pmod{4}}$ &  $\frac{1}{2}q^{\frac{k(n-k)}{2}}(q^{\frac{n-k}{2}}-1)\binom{(n-1)/2}{(k-1)/2}_{q^{2}}$ & 
$\frac{1}{2}q^{\frac{k(n-k)}{2}}(q^{\frac{k}{2}}-1)\binom{(n-1)/2}{k/2}_{q^{2}}$ \\ \hline
$\mathbf{n \equiv 0 \pmod{4}}$ &  $\frac{1}{2}q^{\frac{k(n-k)-1}{2}}(q^{\frac{n}{2}}-1)\binom{(n-2)/2}{(k-1)/2}_{q^{2}}$ & 
$\frac{1}{2}q^{\frac{k(n-k)}{2}}\frac{(q^{\frac{n-k}{2}}-1)(q^{\frac{k}{2}}-1)}{q^{\frac{n}{2}}+1}\binom{n/2}{k/2}_{q^{2}}$ \\
\end{tabular}}.
\caption{The expression of $\binom{n}{\overline{k}}_{q}^{\perp}$ when $q \equiv 3$ (mod $4$)}\label{Table: Lorent2}
\end{center}
\end{table}
\end{proposition}
Recall that $\binom{n}{k}_{q}$ and $\binom{n}{\overline{k}}_{q}^{\perp}$ are polynomials in $q$ whose coefficients are half-integers. 
The next proposition further shows that $\binom{n}{k}_{q}+\binom{n}{\overline{k}}_{q}^{\perp}$ are in $\mathbb{Z}_{\ge 0}[q]$ except for some cases. 
These exceptional cases coincide with the condition that the limits in \cref{limit} are zero. 

\begin{proposition}\label{prop: nondeg}
The expression of $\binom{n}{k}_{q}^{\perp}+\binom{n}{\overline{k}}_{q}^{\perp}$, the number of all non-degenerate quadratic subspaces of $\F$, is given in \textup{\cref{Table: nondeg1}} and \textup{\cref{Table: nondeg2}}. 
Moreover, $\binom{n}{k}_{q}^{\perp}+\binom{n}{\overline{k}}_{q}^{\perp} \in \mathbb{Z}_{\ge 0}[q]$ except for the case where $n$ is even, $k$ is odd if $q \equiv 1$ \textup{(mod $4$)} or $n \equiv 0 \pmod{4}$, $k$ is odd if $q \equiv 3$ \textup{(mod $4$)}.
\begin{table}[H]
\begin{center}
\renewcommand{\arraystretch}{1.7}
\scalebox{0.9}{
\begin{tabular}{c||c|c}
$\binom{n}{k}_{q}^{\perp}+\binom{n}{\overline{k}}_{q}^{\perp}$ &  $\mathbf{k}$ \textbf{\textup{is odd}} & $\mathbf{k}$ \textbf{\textup{is even}} \\ \hline \hline
$\mathbf{n}$ \textbf{\textup{is odd}} &  $q^{\frac{(k+1)(n-k)}{2}}\binom{(n-1)/2}{(k-1)/2}_{q^{2}}$ & 
$q^{\frac{k(n-k)+k}{2}}\binom{(n-1)/2}{k/2}_{q^{2}}$ \\ \hline
$\mathbf{n}$ \textbf{\textup{is even}} &  $q^{\frac{k(n-k)-1}{2}}(q^{\frac{n}{2}}-1)\binom{(n-2)/2}{(k-1)/2}_{q^{2}}$ & 
$q^{\frac{k(n-k)}{2}}\binom{n/2}{k/2}_{q^{2}}$ \\ 
\end{tabular}}
\caption{The expression of $\binom{n}{k}_{q}^{\perp}+\binom{n}{\overline{k}}_{q}^{\perp}$ when $q \equiv 1$ (mod $4$)}\label{Table: nondeg1}
\end{center}
\end{table}

\begin{table}[H]
\begin{center}
\renewcommand{\arraystretch}{1.7}
\scalebox{0.9}{
\begin{tabular}{c||c|c}
$\binom{n}{k}_{q}^{\perp}+\binom{n}{\overline{k}}_{q}^{\perp}$ &  $\mathbf{k}$ \textbf{\textup{is odd}} & $\mathbf{k}$ \textbf{\textup{is even}}  \\ \hline \hline
$\mathbf{n}$ \textbf{\textup{is odd}} &  $q^{\frac{(k+1)(n-k)}{2}}\binom{(n-1)/2}{(k-1)/2}_{q^{2}}$ & 
$q^{\frac{k(n-k)+k}{2}}\binom{(n-1)/2}{k/2}_{q^{2}}$  \\ \hline
$\mathbf{n \equiv 2 \pmod{4}}$ &  $q^{\frac{k(n-k)-1}{2}}(q^{\frac{n}{2}}+1)\binom{(n-2)/2}{(k-1)/2}_{q^{2}}$ &
\multirow{2}{*}{$q^{\frac{k(n-k)}{2}}\binom{n/2}{k/2}_{q^{2}}$ } \\ 
\cline{1-2} 
$\mathbf{n \equiv 0 \pmod{4}}$ &  $q^{\frac{k(n-k)-1}{2}}(q^{\frac{n}{2}}-1)\binom{(n-2)/2}{(k-1)/2}_{q^{2}}$ &  \\ 
\end{tabular}}
\caption{The expression of $\binom{n}{k}_{q}^{\perp}+\binom{n}{\overline{k}}_{q}^{\perp}$ when $q \equiv 3$ (mod $4$)}\label{Table: nondeg2}
\end{center}
\end{table}
\end{proposition}

\begin{corollary}\label{cor: limit}
For any $1 \le k<n$, we have 
\begin{enumerate}
    \item[\textup{(1)}] If $q \equiv 1$ \textup{(mod $4$)}, $\lim_{q \to 1}\binom{n}{\overline{k}}_{q}^{\perp}=0$,
    \item[\textup{(2)}] If $q \equiv 1$ \textup{(mod $4$)}, 
    \[\lim_{q \to -1}\binom{n}{\overline{k}}_{q}^{\perp}=\begin{cases}
    ~~\binom{\left \lfloor n/2 \right \rfloor}{\left \lfloor k/2 \right \rfloor}
 &  \text{ if } n \equiv 1 ~(\textup{mod }4),~k \equiv 2 ~(\textup{mod }4) \text{ or } k \equiv 3 ~(\textup{mod }4),  \\
 & \quad  n \equiv 3 ~(\textup{mod }4),~k \equiv 1 ~(\textup{mod }4)\text{ or }k \equiv 2 ~(\textup{mod }4), \\
 & \quad n \equiv 0 ~(\textup{mod }4),~k \equiv 2 ~(\textup{mod }4),  \\
~~\binom{ n/2  -1}{\left \lfloor k/2 \right \rfloor} & \text{ if } n \equiv 2 ~(\textup{mod }4),~k \equiv 1 ~(\textup{mod }4),\text{ or }k \equiv 2 ~(\textup{mod }4) \text{ or } k \equiv 3 ~(\textup{mod }4), \\
~~\binom{ n/2  -1}{ k/2  -1} & \text{ if } n \equiv 2 ~(\textup{mod }4),~k \equiv 0 ~(\textup{mod }4), \\
~~0 & \text{ otherwise } .
\end{cases}\]
    \item[\textup{(3)}] If $q \equiv 3$ \textup{(mod $4$)}, $\lim_{q \to 1}\binom{n}{\overline{k}}_{q}^{\perp}$ is the same as $\lim_{q \to -1}\binom{n}{\overline{k}}_{q}^{\perp}$ when $q \equiv 1$ (\textup{mod} $4$), 
    \item[\textup{(4)}] If $q \equiv 3$ \textup{(mod $4$)}, $\lim_{q \to -1}\binom{n}{\overline{k}}_{q}^{\perp}=0$.
\end{enumerate}
\end{corollary}

By \cref{limit} and \cref{cor: limit}, the limit of the number of non-degenerate subspaces of $\F$ has the same combinatorial phenomenon with the Euclidean binomial coefficient in \cref{symm}. 
\begin{itemize}
    \item If $q \equiv 1$ (\textup{mod} $4$), $\lim_{q \to 1}\big ( \binom{n}{k}_{q}^{\perp}+\binom{n}{\overline{k}}_{q}^{\perp} \big)$ is the number of symmetric $k$-sets in $\mathbb{Z}/(n+1)\mathbb{Z}$.
    \item If $q \equiv 3$ (\textup{mod} $4$), $\lim_{q \to -1}\big( \binom{n}{k}_{q}^{\perp}+\binom{n}{\overline{k}}_{q}^{\perp}\big)$ is the number of symmetric $k$-sets in $\mathbb{Z}/(n+1)\mathbb{Z}$. 
\end{itemize}

\subsection{Combinatorial properties of $LO_{n}(q)$}

We construct a poset $LO_{n}(q):=(\mathcal{L},\subset)$, where $\mathcal{L}$ is the set of all Lorentzian subspaces of $(\mathbb{F}_{q}^{n},\text{Euc}_{n})$ and call it the \textbf{Lorentzian poset}. We also do not consider the empty set to be a subspace, but we consider the zero space as the least element and $(\mathbb{F}_{q}^{n},\text{Euc}_{n})$ as the maximal element of the Lorentzian poset. 
Recall that $|\text{Lor}_{k},\text{Euc}_{n}|_{q}$ is the number of $k$-dimensional Lorentzian subspaces of $(\mathbb{F}_{q}^{n},\text{Euc}_{n})$.
In the following, we ask the same questions as for the Euclidean poset.

\begin{proposition}\label{prop: LO1}
For any $1\le k<n$, the Lorentzian poset $LO_{n}(q)$ is rank-symmetric. In other words, $|\textup{Lor}_{k},\textup{Euc}_{n}|_{q}=|\textup{Lor}_{n-k},\textup{Euc}_{n}|_{q}$.
\end{proposition}

\begin{proof}
Let $L_{1}$ be a $k$-dimensional Lorentzian subspace of $\F$. Then, there is a $(n-k)$-dimensional Lorentzian subspace $L_{2}$ of $\F$ such that
\[L_{1}\oplus L_{2}=\F=L_{1}\oplus L_{1}^{\perp}.\] 
By Witt's cancellation theorem, we obtain $L_{2}=L_{1}^{\perp}$. Since taking $\perp$ is bijective, the result holds. 
\end{proof}

\begin{proposition}\label{prop: LO2} The Lorentzian poset $LO_{n}(q)$ is rank-unimodal. i.e. For any $n>0$, there is $j$ such that 
\[ \binom{n}{\overline{1}}_{q}^{\perp} \leq \binom{n}{\overline{2}}_{q}^{\perp} \leq \cdots \leq \binom{n}{\overline{j}}_{q}^{\perp} \geq \cdots \geq \binom{n}{\overline{(n-1)}}_{q}^{\perp}. \]
\end{proposition}

\begin{proof}
We consider the following:
\[M:=\frac{\binom{n}{\overline{k}}_{q}^{\perp}}{\binom{n}{\overline{k-1}}_{q}^{\perp}}=\frac{|\text{Lor}_{k},\text{Euc}_{n}|_{q}}{|\text{Lor}_{k-1},\text{Euc}_{n}|_{q}}=\frac{\frac{|\text{Lor}_{1},\text{Euc}_{n}|_{q}}{|\text{Lor}_{1},\text{Euc}_{k}|_{q}}}{\frac{|\text{Lor}_{1},\text{Euc}_{n}|_{q}}{|\text{Lor}_{1},\text{Lor}_{k-1}|_{q}}}
\frac{\binom{\overline{n-1}}{k-1}_{q}^{\perp}}{\binom{\overline{n-1}}{k-2}_{q}^{\perp}}=\frac{|\text{Lor}_{1},\text{Lor}_{k-1}|_{q}}{|\text{Lor}_{1},\text{Lor}_{k}|_{q}}\frac{|\text{Euc}_{1},\text{Lor}_{n-k+1}|_{q}}{|\text{Euc}_{1},\text{Euc}_{k-1}|_{q}}.\]
We note that $M \ge 1$ if $n-k+1 > k$. i.e. $\frac{n+1}{2} > k$. 
On the other hand, $M \le 1$ if $n-k+1 < k$. i.e., $\frac{n+1}{2} < k$. If $n$ is odd, $\binom{n}{\overline{\frac{n-1}{2}}}_{q}^{\perp}=\binom{n}{\overline{\frac{n+1}{2}}}_{q}^{\perp}$ since $LO_{n}(q)$ is rank-symmetric. Thus we set $j=(n-1)/2$ or $j=(n+1)/2$.
If $n$ is even, let us choose $j=n/2$. It follows that $LO_{n}(q)$ is rank-unimodal.
\end{proof}

\section{Future research}\label{Future}

(1) Given a set of combinatorial objects $\mathcal{C}$, a \textbf{combinatorial statistic} is a function from $\mathcal{C}$ to $\mathbb{Z}_{\ge 0}$. For example, let $\mathcal{C}$ be the set of Young diagrams of the size $k \times (n-k)$, and define a function $f: \mathcal{C} \rightarrow \mathbb{Z}_{\ge 0}$ such that $f(\lambda)\text{ is the sum of the parts of } \lambda$.
Note that $q$-binomial coefficients $\binom{n}{k}_{q}$ has a combinatorial statistic $f: \mathcal{C} \rightarrow \mathbb{Z}_{\geq 0}$ as follows:
\[\binom{n}{k}_{q}=\sum_{\lambda \in \mathcal{C}}q^{f(\lambda)}.\]
Recall that $\binom{n}{k}_{q}^{\perp}+\binom{n}{\overline{k}}_{q}^{\perp}$ is the number of non-degenerate $k$-dimensional subspaces of $(\mathbb{F}_{q}^{n},\textup{Euc}_{n})$.
\cref{prop: nondeg} shows that the coefficients of $\binom{n}{k}_{q}^{\perp}+\binom{n}{\overline{k}}_{q}^{\perp}$ are nonnegative except for two cases. 
Based on this observation, it is natural to ask the following question. 

\begin{question}
Are there any combinatorial statistics for $\binom{n}{k}_{q}^{\perp}+\binom{n}{\overline{k}}_{q}^{\perp}$?
\end{question}

Additionally, one could ask if there are other combinatorial ways to formulate $\binom{n}{k}_{q}^{\perp}$ even though $\binom{n}{k}_{q}^{\perp}$ is a polynomial in $q$ which contains non-positive coefficients for several cases. 

\smallskip
(2) An \textbf{antichain} in $P$ is a subset $A$ of $P$ such that no two elements are comparable. A graded poset $P$ of rank $n$ has the \textbf{Sperner property} if \[\text{max}\left \{ |A|~|~A\text{ is an antichain of }P\right \}=\text{max}\left \{ |P_{k}|~|~0\leq k \leq n \right \}\]
The motivation for the Sperner property comes from counting the largest antichain in a Boolean algebra. 
The more history can be found in \cite{St1}.
Note that the Boolean algebra $B_{n}$ and the poset $L_{n}(q)$ constructed by all subspaces of $\mathbb{F}_{q}^{n}$ are Sperner. 
We ask the same question to the posets $E_{n}(q)$ and $LO_{n}(q)$.

\begin{question}
Are the Euclidean poset $E_{n}(q)$ and the Lorentzian poset $LO_{n}(q)$ Sperner?
\end{question}

For example, a sage code indicates that the Euclidean poset $E_5(3)$ is Sperner even though it is not a lattice. 
The main difficulty to solve this question follows the type of the intersection between quadratic spaces. 
The same strategy using order-matching maps from \cite{St1} does not work since the intersection between two $k$-dimensional Euclidean subspaces of $(\mathbb{F}_{q}^{n},\textup{Euc}_{n})$ is not Euclidean in general.

\medskip
\noindent \textbf{The conflict of interest statement.}
\noindent On behalf of all authors, the corresponding author states that there is no conflict of interest.

\medskip

\end{document}